\newenvironment{customthm}[1]
  {\innercustomthm}
  {\endinnercustomthm}
\numberwithin{equation}{section}
\title{On Rigidity of {$S$}-Arithmetic Kac-Moody Groups}                                     
\author{Amir Farahmand Parsa and Ralf K\"ohl}                 
\keywords{{$S$}-arithmetic Kac-Moody group, isomorphism problem, strong rigidity, Margulis super-rigidity}         
\address{%
Amir Farahmand Parsa\\               
School of Mathematics, Institute for Research in Fundamental Sciences (IPM), P.O. Box: 19395-5746, Tehran, Iran, School of Mathematics, TATA Institute of Fundamental Research (TIFR), 400 005, Mumbai, India\\            
a.parsa@ipm.ir             
}
\address{%
Ralf K\"ohl\\               
Mathematisches Institut,
Justus Liebig Universit\"at Giessen
Arndtstrasse 2,
35392 Giessen, Germany\\            
ralf.koehl@math.uni-giessen.de                
}
\begin{document}


\maketitle

\begin{abstract}
In this article we investigate rigidity properties of $S$-arithmetic Kac-Moody groups in characteristic $0$.
\end{abstract}

\section{Introduction}
In this article we continue and generalize the study of rigidity properties of arithmetic Kac--Moody groups initiated in \cite{parsa2015strong} and \cite{zbMATH06667742}.
Notably, we confirm that the following super-rigidity and strong rigidity results hold also in the context of $S$-arithmetic Kac--Moody groups in characteristic $0$. For basic definitions we refer to Section~\ref{secKM}.
\begin{customthm}{\ref{thmsrbc}}[Super-rigidity]
Let $\mathcal{G}$ be a simply connected irreducible $2$-spherical split Kac-Moody functor of rank $\geq 2$.\ Let $\mathcal{H}$ be a centred $2$-spherical split Kac-Moody functor. Let $\Phi^{\mathcal{G}_{re}}$ be the set of real roots of $\mathcal{G}$. Let $$\phi:\Gamma(\mathcal{O}_{S})\to\mathcal{H}(l)$$ be a group homomorphism where $\Gamma(\mathcal{O}_S)$ is defined as (\ref{eqGammaOS}) and $l$ is a local field of characteristic  0. Then either
\begin{itemize}
    \item [(a)] there exists an ideal $0\not= q\trianglelefteq\mathcal{O}_{S}$ such that $ \Gamma_{\Pi}(q)$ (see (\ref{eqgammapiq})) is contained in the kernel of $\phi$ for a system of simple roots $\Pi\subset\Phi^{\mathcal{G}_{re}},$ or,
    
    \item[(b)] there exists a system of simple roots $\Pi\subset\Phi^{\mathcal{G}_{re}}$ such that either
\begin{itemize}
    \item[(i)]for some $\nu\in S\backslash\infty$, $l$ is a Galois extension of $K_{\nu}$ and there exists a continuous (with respect to both the Kac-Peterson and the weak Zariski topologies) group homomorphism $$\phi_{K_{\nu}}:\mathcal{G}(K_{\nu})\to\mathcal{H}(K_{\nu}),$$ which is {uniquely determined} by the property that there exists an ideal $0\not= q\trianglelefteq\mathcal{O}_{S}$ such that
    \begin{equation}
  \tag{VEP}
    \phi_{l}|_{\Gamma_{\Pi}(q)}=\phi|_{\Gamma_{\Pi}(q)};\label{introvirextprop}
    \end{equation}
    \item[(ii)] or, $l=\mathbb{R}, \mathbb{C}$ and there exists a continuous (with respect to both the Kac-Peterson and the weak Zariski topologies) group homomorphism $$\phi_{l}:\mathcal{G}(l)\to\mathcal{H}(l),$$ which is {uniquely determined} by the property that there exists an ideal $0\not= q\trianglelefteq\mathcal{O}_{S}$ such that
    \begin{equation}
  \tag{VEP}
    \phi_{l}|_{\Gamma_{\Pi}(q)}=\phi|_{\Gamma_{\Pi}(q)}.\label{introvirextprop0}
    \end{equation}
    \end{itemize}
    \end{itemize}
\end{customthm}
The dichotomy between (a) and (b) in the above theorem resembles the classical dichotomy that a homomorphic image of an $S$-arithmetic group is either finite (and hence by the congruence subgroup property has a congruence subgroup in its kernel) or allows for a virtual homomorphic extension to the ambient locally compact group.

The general strategy is to localize arguments to fundamental rank $2$ subgroups of split Kac-Moody groups which are in turn Chevalley groups and then use classical rigidity results for such Chevalley groups to obtain local virtual extension maps and then glue these local extensions together in order to obtain a global ``virtual'' extension. To localize the arguments we prove a fixed point lemma (Lemma~\ref{lemfptbs}) using Tavgen's bounded generation result (see \cite{zbMATH04142315}) and the Davis realization of the twin buildings associated to Kac-Moody groups over local fields (see \cite{zbMATH05218470}). Making use of Tavgen's bounded generation result is the main place where we substantially use the hypothesis of characteristic $0$. Then we use Margulis super-rigidity (see \cite[Theorem VIII 3.12, Theorem VIII(B)]{zbMATH00049189}) to build local virtual extensions. Here we need the hypotheses of rank $2$ and of $2$-sphericity in order to guarantee the existence of suitable rank $2$ subgroups of the involved Kac-Moody groups to which Margulis super-rigidity applies in our local-to-global approach. Finally, to obtain a global ``virtual'' continuous extension, we use a topological Curtis-Tits Theorem (see \cite[Theorem 2.20]{zbMATH06667742}) where $2$-sphericity again is essential.

By applying this super-rigidity twice we obtain the following strong rigidity.
\begin{customthm}{\ref{thmstrigbc}}[Strong rigidity]
Let $\mathcal{G}$ and $\mathcal{G}'$ be simply connected irreducible $2$-spherical split Kac–Moody functors of rank $\geq 2$. Let $K$ be an algebraic number field and let $S$ and $S'$ be two finite sets of places containing all the archimedean places and at least one non-archimedean for each of them. Let either $\phi:\Gamma(\mathcal{O}_S)\to\Gamma(\mathcal{O}_{S'})$ or $\phi:\mathcal{G}(\mathcal{O}_S)\to\mathcal{G}'(\mathcal{O}_{S'})$ be abstract group isomorphisms. Then $S=S'$ and for each place $\nu\in S$ there exists a continuous (with respect to both the Kac-Peterson and the weak Zariski topologies) group isomorphism $$\tilde{\phi}:\mathcal{G}(K_{\nu})\to\mathcal{G}'(K_{\nu}),$$
such that it satisfies (\ref{introvirextprop}). Moreover, $K$-points are preserved by this extension namely, $$\tilde{\phi}|_{\mathcal{G}(K)}:\mathcal{G}(K)\to\mathcal{G}'(K),$$
is an isomorphism. Hence the Kac-Moody functors are equal, i.e., $\mathcal{G}=\mathcal{G}'$.
\end{customthm}
\noindent
\textbf{Acknowledgements.} The research for this article was finished during the second author's visit to TIFR, Mumbai in April 2019 where the first author held a postdoctoral fellowship at that time -- the second author expresses his sincere gratitude for the hospitality during his visit. The second author also acknowledges partial financial support by Deutsche Forschungsgemeinschaft through the project KO4323/13. The first-named author would like to thank M.S. Raghunathan for his comments and consultations regarding this article. Both authors would like to thank T.N. Venkataramana for many helpful conversations and Timoth\'ee Marquis and Stefan Witzel for their comments on the draft. They would like to also thank the referee for their valuable and constructive remarks and comments on the manuscript.

\section{Kac-Moody functors}\label{secKM}
This section is to define split Kac-Moody functors which are the main ingredient of this article. The rest of the terms that are not explicitly defined here can be found in  \cite{zbMATH06667742} and the references therein.

Let $A$ be a generalized Cartan matrix of size $n$, i.e., an integral matrix $A=(a_{ij})_{1\leq i,j\leq n}$ such that $a_{ii}=2$ for all $1\leq i\leq n$, $a_{ij}\leq 0$  for $i\not=j$ and $a_{ij}=0$ if and only if $a_{ji}=0$ for all $1\leq i,j\leq n.$ A quintuple $\mathcal{D}=(I,A,\Lambda,\{c_{i}\}_{i\in I},\{h_{i}\}_{i\in I})$ is called a Kac-Moody root datum where $I=\{1,\cdots,n\}$, $A$ is a generalized Cartan matrix, $\Lambda$ is a free $\mathbb{Z}$-module, $\{c_{i}\}_{i\in I}\subset\Lambda$, and $\{h_{i}\}_{i\in I}$ is a subset of the $\mathbb{Z}$-dual  $\Lambda^{\vee}$ of $\Lambda$ such that for all $i,j\in I$, $h_{i}(c_j)=a_{ij}.$

In \cite{zbMATH04017219}, Tits associated a triplet $(\mathcal{G},\{\phi_{i}\}_{i\in I},\eta)$ to any Kac-Moody root datum $\mathcal{D}$ as follows.\ Here $\mathcal{G}$ is a group functor on the category of commutative unital rings, $\phi_{i}$ are maps $\text{\textbf{SL}}_{2}(-)\to\mathcal{G}(-),$ and $\eta$ is a natural transformation  $\text{Hom}_{\mathbb{Z}-\text{alg}}(\mathbb{Z}[\Lambda],-)\to\mathcal{G}$ such that the following assertions hold:
\begin{itemize}
    \item[(KMG1)] If $k$ is a field then $\mathcal{G}(k)$ is generated by the images of $\phi_{i}$ and $\eta(k).$
      \item[(KMG2)] For all rings $R$ the homomorphism $\eta(R):\text{Hom}_{\mathbb{Z}-\text{alg}}(\mathbb{Z}[\Lambda],R)\to\mathcal{G}(R)$ is injective.
        \item[(KMG3)] Given a ring $R$ and $u\in R^{\times}$, for every $i\in I$ one has $$\phi_{i}\begin{pmatrix} 
u & 0 \\
0 & u^{-1} 
\end{pmatrix}=\eta(\lambda\mapsto u^{h_i(\lambda)}),$$
where $\lambda\in\Lambda.$
          \item[(KMG4)] If $R$ is a ring, $k$ is a field and $\imath:R\to k$ is a monomorphism, then $\mathcal{\imath}:\mathcal{G}(R)\to\mathcal{G}(k)$ is a monomorphism.
            \item[(KMG5)] If $\mathfrak{g}$ is the complex Kac-Moody algebra of type $A,$ then there exists a homomorphism $\text{\textbf{Ad}}:\mathcal{G}(\mathbb{C})\to \text{Aut}(\mathfrak{g})$ such that $$\text{kernel}(\text{\textbf{Ad}})\subseteq\eta(\mathbb{C})(\text{Hom}_{\mathbb{Z}-\text{alg}}(\mathbb{Z}[\Lambda],\mathbb{C})),$$
            and for a given $z\in\mathbb{C}$ one has $$\text{\textbf{Ad}}\bigg(\phi_{i}\begin{pmatrix} 
1 & z \\
0 & 1 
\end{pmatrix}\bigg)=\text{exp}\big(\text{\textbf{Ad}}_{ze_i}\big),$$ $$\text{\textbf{Ad}}\bigg(\phi_{i}\begin{pmatrix} 
1 & 0 \\
z & 1 
\end{pmatrix}\bigg)=\text{exp}\big(\text{\textbf{Ad}}_{zf_i}\big);$$
where $\{e_i, f_i\}$ are part of a standard $\mathfrak{sl}_2$-triple for the fundamental Kac–Moody sub-Lie algebra corresponding to the simple root $\alpha_{i};$ furthermore, for every homomorphism $\gamma\in\text{Hom}_{\mathbb{Z}-\text{alg}}(\mathbb{Z}[\Lambda],\mathbb{C})$ one has $$\text{\textbf{Ad}}(\eta(\mathbb{C})(\gamma))(e_i)=\gamma(c_i)\cdot e_i,~~~~~\text{\textbf{Ad}}(\eta(\mathbb{C})(\gamma))(f_i)=\gamma(-c_i)\cdot f_{i}.$$
\end{itemize}
The group functor $\mathcal{G}$ associated to $\mathcal{D}$ is called a split Kac-Moody functor. The group functor $\mathcal{G}$ is simply connected if $\{h_i\}$ form a basis for $\Lambda^{\vee}$ and is centered if for every field $k$, $\mathcal{G}(k)$ is generated by the images of the $\phi_{i}.$ Note that
simple connected implies centered (see \cite{zbMATH04017219}).

\section{Super-rigidity}\label{secsuprig}
Let $K$ be an algebraic number field. Let $S$ be a finite set of places containing all the archimedeans, whose set is denoted by $\infty$. Denote by $\mathcal{O}_{S}$ the set of $S$-integers and by $\mathcal{O}_K$ the set of algebraic integers in $K$.  Let $\textbf{G}$ be a Chevalley group scheme. Define, $G_{S}:=\prod_{\nu\in S}\textbf{G}_{K_{\nu}}$, where $K_{\nu}$ is the completion of $K$ with respect to $\nu\in S$. 

\begin{lemma}[Fixed point theorem]\label{lemfptbs}
Let $\textbf{G}$ be an irreducible universal Chevalley group scheme of rank $\geq 2$. Let $K$ and $S$ be as above and further let $S$ contain at least one non-archimedean.\ Any action of $\textbf{G}(\mathcal{O}_{S})$ on a complete $CAT(0)$ polyhedral complex by cellular isometries has a global fixed point.
\end{lemma}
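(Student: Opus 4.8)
The plan is to produce a bounded $\textbf{G}(\mathcal{O}_S)$-orbit in $X$ and then conclude by the Bruhat--Tits fixed point theorem: the circumcentre of a bounded orbit in a complete $CAT(0)$ space is a global fixed point. The reduction to a bounded orbit rests on Tavgen's bounded generation theorem \cite{zbMATH04142315}. Since $\textbf{G}$ is universal of rank $\geq 2$ and $S$ contains a non-archimedean place --- so that $\#S\geq 2$ and, as $\mathrm{char}\,K=0$, the unit group $\mathcal{O}_S^\times$ is infinite --- that theorem yields a factorization $\textbf{G}(\mathcal{O}_S)=U_{\alpha_1}(\mathcal{O}_S)\cdots U_{\alpha_N}(\mathcal{O}_S)$ into boundedly many real root subgroups. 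If each $U_{\alpha_i}(\mathcal{O}_S)$ had a global fixed point $y_i$, then fixing any basepoint $x_0$ and writing $g=u_1\cdots u_N$ with $u_i\in U_{\alpha_i}(\mathcal{O}_S)$ one gets $d(x_0,gx_0)\leq 2\sum_{i=1}^N d(x_0,y_i)$, independently of $g$, so $\textbf{G}(\mathcal{O}_S)x_0$ is bounded. Hence the whole statement reduces to the key claim that each real root subgroup $U_\alpha(\mathcal{O}_S)\cong(\mathcal{O}_S,+)$ fixes a point of $X$.

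For this, the first input is that $X$ is a \emph{polyhedral} complex (finitely many isometry types of cells) acted on \emph{cellularly}: by a theorem of Bridson, every element of $\textbf{G}(\mathcal{O}_S)$ then acts as a semisimple isometry, and there is a constant $\varepsilon=\varepsilon(X)>0$ such that every hyperbolic cellular isometry of $X$ has translation length $\geq\varepsilon$. This uniform gap --- unavailable for an arbitrary complete $CAT(0)$ space --- is exactly the point at which the polyhedrality hypothesis is used; the model case is the Davis realization of the twin buildings of Kac--Moody groups over local fields \cite{zbMATH05218470}, to which the lemma is ultimately applied. The second input is structural: $U_\alpha(\mathcal{O}_S)$ is abelian and is the directed union of its finitely generated subgroups, each isomorphic to some $\mathbb{Z}^m$; moreover the split torus $T(\mathcal{O}_S)\subseteq\textbf{G}(\mathcal{O}_S)$ acts on it by conjugation, rescaling $x_\alpha(a)$ to $x_\alpha(u^{\langle\alpha,\alpha^\vee\rangle}a)$ for $u\in\mathcal{O}_S^\times$.

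To prove that $U_\alpha(\mathcal{O}_S)$ is elliptic one first shows each $x_\alpha(a)$ is elliptic. Using that $\textbf{G}$ is irreducible of rank $\geq 2$, one may --- equivalently, after conjugating by a Weyl-group representative lying in $\textbf{G}(\mathbb{Z})\subseteq\textbf{G}(\mathcal{O}_S)$ --- assume $\alpha=\beta+\gamma$ for real roots $\beta,\gamma$, so that the Chevalley commutator relations express $x_\alpha(a)$ through $U_\beta$ and $U_\gamma$; feeding in the conjugation action of $T(\mathcal{O}_S)$ and the abundance of units then lets one realise $x_\alpha(a)$ as an arbitrarily high power of a group element, whence semisimplicity and the bound $|\,\cdot\,|\geq\varepsilon$ on hyperbolic elements force $|x_\alpha(a)|=0$. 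With every element of $U_\alpha(\mathcal{O}_S)$ now elliptic and semisimple, the same mechanism applied to each finitely generated subgroup $A\cong\mathbb{Z}^m$ (a free abelian group of elliptic semisimple isometries of a complete $CAT(0)$ space has a fixed point) shows $\mathrm{Fix}(A)$ is non-empty, closed and convex; these sets are nested over the directed system, and a convexity/completeness argument produces a point of $\bigcap_A\mathrm{Fix}(A)=\mathrm{Fix}(U_\alpha(\mathcal{O}_S))$, proving the claim and hence the lemma.

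The main obstacle is precisely this key claim. In the purely arithmetic case the root subgroups $U_\alpha(\mathcal{O}_K)$ are finitely generated and property $(FA)$-type inputs apply at once; over $\mathcal{O}_S$ they are infinitely generated, and one must both force each $x_\alpha(a)$ to be elliptic --- via the torus/divisibility trick, for which the hypotheses $\mathrm{char}\,K=0$, $\#S\geq 2$ and $\mathrm{rank}\geq 2$ are all genuinely needed --- and control the possibly unbounded flats $\mathrm{Fix}(A)$ so as to extract a common fixed point. The polyhedrality of $X$ together with cellularity of the action is what makes the first of these possible, by guaranteeing semisimplicity of all isometries together with the translation-length gap $\varepsilon$.
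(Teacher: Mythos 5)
Your overall architecture is the paper's: Tavgen's bounded generation plus the Bruhat--Tits circumcentre argument reduces everything to showing that each root subgroup $U_{\alpha}(\mathcal{O}_{S})$ has a global fixed point (the paper outsources this reduction to \cite[Corollary 2.5]{zbMATH05541531}), and the polyhedrality/cellularity hypotheses enter only through Bridson's theorem that cellular isometries are semisimple with a positive lower bound on the translation lengths of hyperbolic elements. Where you diverge is in how you make $U_{\alpha}(\mathcal{O}_{S})$ elliptic, and there your plan has two genuine gaps. First, the mechanism you propose for the ellipticity of each $x_{\alpha}(a)$ does not work as described: conjugating by $h_{\alpha}(u)$, $u\in\mathcal{O}_{S}^{\times}$, only shows that $x_{\alpha}(a)$ and $x_{\alpha}(u^{\langle\alpha,\alpha^{\vee}\rangle}a)$ have the \emph{same} translation length, and the Chevalley commutator formula writes $x_{\alpha}(a)$ as a commutator of elements of $U_{\beta}$ and $U_{\gamma}$, not as a high power of anything; neither device produces the $n$-th roots you need. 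The paper's route is more direct and uses neither rank $\geq 2$ nor the commutator relations at this stage: for $\nu\in S$ non-archimedean with residue characteristic $p$, the additive group $\mathcal{O}_{S}$ is $p$-divisible, so $x_{\alpha}(a)=x_{\alpha}(p^{-n}a)^{p^{n}}$, whence $\ell(x_{\alpha}(a))=p^{n}\,\ell(x_{\alpha}(p^{-n}a))$ for semisimple isometries, and the translation-length gap forces $\ell(x_{\alpha}(a))=0$. (Rank $\geq 2$ and irreducibility are needed for Tavgen's theorem, not here.) This is exactly the content of \cite[Corollary 2.7]{zbMATH05541531}, which the paper simply cites after observing that $U_{\alpha}(\mathcal{O}_{S})$ is a $p$-divisible abelian group.

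Second, your passage from ``every finitely generated subgroup $A\leq U_{\alpha}(\mathcal{O}_{S})$ fixes a point'' to ``$U_{\alpha}(\mathcal{O}_{S})$ fixes a point'' is asserted rather than proved: a decreasing chain of non-empty closed convex subsets of a complete $CAT(0)$ space can have empty intersection (half-spaces or horoballs drifting to infinity), so ``a convexity/completeness argument'' is not available off the shelf. This is precisely the part of \cite[Corollary 2.7]{zbMATH05541531} that does real work for the infinitely generated group $(\mathcal{O}_{S},+)$, and it again exploits divisibility and the discreteness of the translation spectrum, not merely the ellipticity of the individual finitely generated pieces. In short: your reduction via bounded generation is correct and coincides with the paper's first step, but the key claim --- ellipticity of a single root subgroup --- is exactly where your argument is incomplete, and the missing ingredient is the $p$-divisibility of $\mathcal{O}_{S}$ together with Caprace's fixed-point theorem for divisible abelian groups acting cellularly on polyhedral complexes.
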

\begin{proof}
By \cite[Corollary 1]{zbMATH04142315}, $\textbf{G}(\mathcal{O}_{S})$ is boundedly generated by the finite set of its root subgroups, namely $$\mathcal{A}:=\{U_{\alpha}(\mathcal{O}_{S})~|~\alpha\in\Phi^{\textbf{G}}\},$$
where $\Phi^{\textbf{G}}$ is the set of roots of $\textbf{G}$. By \cite[Corollary 2.5]{zbMATH05541531} the lemma follows if each of the above root subgroups has a global fixed point. Let $\nu\in S$ be a non-archimedean place and let $p$ be the characteristic of the residue field of $K_{\nu}$. Then for each root $\alpha\in\Phi$, $U_{\alpha}(\mathcal{O}_{S})$ is a $p$-divisible abelian group and hence has a global fixed point by \cite[Corollary 2.7]{zbMATH05541531}.
\end{proof}

Recall a subgroup of a Kac-Moody group is called bounded if it is contained in the intersection of two spherical parabolic subgroups of opposite signs (see \cite[Corollary 12.67]{zbMATH05288866}).\ The following is derived from \cite{zbMATH03572914} and \cite[Theorem VIII 3.12]{zbMATH00049189}.
\begin{theorem}[Local super-rigidity]\label{thmlsrbc}
Let $\textbf{G}$ be an irreducible universal Chevalley group scheme of rank $\geq 2$.\ Let $K$ be any algebraic number field and $S$ contain at least one non-archimedean place. Let $\mathcal{H}$ be a centred $2$-spherical split Kac-Moody functor and let $l$ be a local field of characteristic  0. Assume that $$\phi:\textbf{G}(\mathcal{O}_{S})\to\mathcal{H}(l),$$ is an abstract group homomorphism. Then there exists a bounded subgroup $B$ of $\mathcal{H}(l)$ such that either:
\begin{itemize}
    \item[(i)] The image of $\text{Ad}|_{B}\circ\phi$ is relatively compact (for the adjoint map see \cite[\S 10.3]{zbMATH01762600}).
    \item[(ii)] For some $\nu\in S\backslash\infty$, $l$ is a Galois extension of $K_{\nu}$ and the map $\phi$ extends uniquely, continuously (with respect to both the Kac-Peterson and the (weak) Zariski topologies) and virtually to $$\tilde{\phi}:\textbf{G}_{K_{\nu}}\to B.$$
    \item[(iii)] $l=\mathbb{R},~\text{or}~\mathbb{C}$ and $\phi$ virtually extends to the following (uniquely determined) continuous map (with respect to both the Kac-Peterson and the (weak) Zariski topologies): $$\textbf{G}_{l}\to B.$$ 
\end{itemize}
\end{theorem}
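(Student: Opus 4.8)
The plan is to trap the image of $\phi$ inside a bounded, essentially reductive subgroup of $\mathcal{H}(l)$ by a fixed-point argument, then to invoke Margulis super-rigidity for linear groups in its $S$-arithmetic form, and finally to transport the resulting algebraic extension back into $\mathcal{H}(l)$. First I would let $\mathcal{H}(l)$ act, via its twin $BN$-pair, on the Davis realizations $X_{+}$ and $X_{-}$ of the two halves of its twin building; by \cite{zbMATH05218470} these are $CAT(0)$ polyhedral complexes on which $\mathcal{H}(l)$ acts by cellular isometries, they are complete because assembled from only finitely many isometry types of Coxeter cells, and the stabilizer of any point of $X_{\pm}$ is a spherical parabolic subgroup of sign $\pm$. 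Pulling these actions back along $\phi$ and applying Lemma~\ref{lemfptbs} twice produces spherical parabolic subgroups $P_{+}$ and $P_{-}$ of opposite signs with $\phi(\textbf{G}(\mathcal{O}_{S}))\subseteq P_{+}\cap P_{-}$, i.e.\ contained in a bounded subgroup in the sense of \cite[Corollary 12.67]{zbMATH05288866}. By the structure theory of Kac--Moody groups such a bounded subgroup is contained in a conjugate of a Levi factor of a spherical parabolic; as the maximal torus of $\mathcal{H}$ has finite rank $\dim_{\mathbb{Z}}\Lambda$, this Levi is the group $\textbf{L}(l)$ of $l$-points of a finite-dimensional split reductive $l$-group $\textbf{L}$, it sits in $\mathcal{H}(l)$ as a closed subgroup carrying its natural local-field topology for both the Kac--Peterson and the weak Zariski topology, and its image under $\text{Ad}$ (see \cite[\S 10.3]{zbMATH01762600}) is again a finite-dimensional linear algebraic group over $l$.

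Next, since $\textbf{G}$ is universal of rank $\geq 2$, the group $\textbf{G}(\mathcal{O}_{S})$ is perfect, so its image lands in $\textbf{L}^{\mathrm{der}}(l)$; I would take this derived group (still bounded) to be the subgroup $B$ of the statement, and compose with $\textbf{L}^{\mathrm{der}}\to\textbf{L}^{\mathrm{ad}}$ to obtain a homomorphism $\bar\phi\colon\textbf{G}(\mathcal{O}_{S})\to\textbf{L}^{\mathrm{ad}}(l)$. Now $\textbf{G}(\mathcal{O}_{S})$ is an $S$-arithmetic lattice in the higher-rank group $G_{S}=\prod_{\nu\in S}\textbf{G}_{K_{\nu}}$, so \cite{zbMATH03572914} together with \cite[Theorem VIII 3.12, Theorem VIII(B)]{zbMATH00049189} applies to $\bar\phi$: the Zariski closure $\textbf{M}$ of its image is semisimple and factors as $\textbf{M}=\textbf{M}_{\mathrm{c}}\cdot\textbf{M}_{\mathrm{nc}}$ so that the image of $\textbf{G}(\mathcal{O}_{S})$ in $(\textbf{M}/\textbf{M}_{\mathrm{nc}})(l)$ is relatively compact, while the composite $\Theta\colon G_{S}\to(\textbf{M}/\textbf{M}_{\mathrm{c}})(l)$ of $\bar\phi$ with the projection is, after passage to a finite-index subgroup, the restriction of a continuous homomorphism. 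If $\textbf{M}_{\mathrm{nc}}$ is trivial, then $\bar\phi$, and hence $\phi$ -- the isogeny $\textbf{L}^{\mathrm{der}}(l)\to\textbf{L}^{\mathrm{ad}}(l)$ being proper -- has relatively compact image, so $\text{Ad}|_{B}\circ\phi$ has relatively compact image and we are in alternative~(i).

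In the remaining case $\Theta$ is a non-trivial continuous homomorphism of $G_{S}$ into the semisimple $l$-isotropic group $\textbf{N}:=\textbf{M}/\textbf{M}_{\mathrm{c}}$. Projecting onto the simple factors of $\textbf{N}$ and using rigidity of continuous homomorphisms between semisimple groups over characteristic-$0$ local fields, the map factors through the factor $\textbf{G}_{K_{\nu}}$ for which there is a continuous embedding of fields $K_{\nu}\hookrightarrow l$, and is induced by that embedding together with an $l$-isogeny; such an embedding of local fields forces $l$ to be an extension of $K_{\nu}$, which is finite and may be taken Galois when $\nu\in S\backslash\infty$ (alternative~(ii)) and forces $l=\mathbb{R}$ or $\mathbb{C}$ when $\nu$ is archimedean (alternative~(iii)). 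Restricting $\Theta$ to that factor yields a continuous homomorphism $\textbf{G}_{K_{\nu}}\to\textbf{N}(l)$, which, $\textbf{G}$ being simply connected, lifts along the central isogeny to a homomorphism $\tilde\phi\colon\textbf{G}_{K_{\nu}}\to\textbf{L}^{\mathrm{sc}}(l)\to\textbf{L}^{\mathrm{der}}(l)=B$ that is continuous for both the Kac--Peterson and the weak Zariski topology. On $\textbf{G}(\mathcal{O}_{S})$ the homomorphisms $\tilde\phi$ and $\phi$ differ only by a homomorphism into the finite centre of $\textbf{L}^{\mathrm{der}}$, hence agree on a finite-index subgroup (shrinking once more, if necessary, to absorb the ``virtual'' in Margulis's theorem); this is the asserted virtual extension, and it is unique because $\mathcal{O}_{S}$ is dense in $K_{\nu}$, so a finite-index subgroup of $\textbf{G}(\mathcal{O}_{S})$ is already dense in $\textbf{G}_{K_{\nu}}$ while $\tilde\phi$ is continuous.

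I expect the main obstacle to be twofold. First, the opening step: one must check carefully that the Davis realizations provide \emph{complete} $CAT(0)$ complexes with cellular $\mathcal{H}(l)$-action and \emph{spherical} point stabilizers, and that a bounded subgroup sits inside a reductive Levi of finite type carrying the topology induced from $\mathcal{H}(l)$. Second, the passage between the locally compact linear world -- where $S$-arithmetic Margulis super-rigidity and the classification of continuous homomorphisms of semisimple groups over characteristic-$0$ local fields apply, with all the attendant bookkeeping about compact factors, about which factor $\textbf{G}_{K_{\nu}}$ carries the extension and about the field extension $l/K_{\nu}$ -- and the Kac--Moody group $\mathcal{H}(l)$, where one needs to know that an algebraic-group morphism into $\textbf{L}(l)$ remains continuous as a map into $\mathcal{H}(l)$ for the Kac--Peterson and the weak Zariski topologies.
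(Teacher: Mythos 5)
Your proposal follows essentially the same route as the paper: the fixed-point lemma (via Tavgen's bounded generation and the Davis realization) traps the image in a bounded subgroup, a linearization of that bounded subgroup reduces the problem to Margulis' $S$-arithmetic super-rigidity, and the resulting algebraic extension is lifted back to $B$ using simple connectedness of $\textbf{G}$, with uniqueness following from density of $\mathcal{O}_S$ in $K_\nu$. The only notable variations are cosmetic: the paper linearizes via the Kac--Moody adjoint representation $\text{Ad}|_B$ and must invoke the normal subgroup theorem to absorb a possibly infinite central kernel into case~(i), and it performs the lift back to $B$ explicitly on root subgroups (using the congruence subgroup property and openness of $\text{Ad}$ there, which is also how it gets weak-Zariski continuity), whereas you pass to the derived group of the Levi (where perfectness should be weakened to finiteness of the abelianization of $\textbf{G}(\mathcal{O}_S)$, which suffices for a virtual statement) and lift abstractly through the simply connected cover.
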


\begin{proof}
By Lemma~\ref{lemfptbs}, the image of $\phi$ is contained in a bounded subgroup $B$ of $\mathcal{H}(l)$ (see \cite[\S 2.1.4]{zbMATH05541531}). 
Define the composite map $$\eta:\textbf{G}(\mathcal{O}_{S})\xrightarrow{\phi}B\xrightarrow{\text{Ad}|_{B}}\textbf{H}_{l},$$
where $\textbf{H}$ is a linear algebraic $l$-group. Without loss of generality we can assume that the image of $\eta$ is dense in $\textbf{H}$ which, by \cite[Theorem VIII(B)]{zbMATH00049189}, means $\textbf{H}$ is semisimple. Now by \cite[Theorem 3]{zbMATH03572914} and \cite[Theorem VIII 3.12, Theorem VIII(B)]{zbMATH00049189} (see also \cite[Theorem 10.1.5]{zbMATH03911340}) either:
\begin{itemize}
    \item[(a)] The image of $\eta$ is relatively compact in $\textbf{H}_{l}$.
        \item[(b)] For some $\nu\in S\backslash\infty$, $l$ is a Galois extension of $K_{\nu}$ and there exists a uniquely determined $K_{\nu}$-rational surjection $\bar{\eta}:\textbf{G}\to\textbf{H}$, such that the map $$\bar{\eta}|_{K_{\nu}}:\textbf{G}_{K_{\nu}}\to\textbf{H}_{K_{\nu}},$$
    is a virtual extension of $\eta$.
    \item[(c)] $l=\mathbb{R}~\text{or}~\mathbb{C}$ and there exists a uniquely determined $l$-rational surjection $\bar{\eta}:G_{\infty}\to\textbf{H}$ such that its restriction to $l$-rational points is a virtual extension of $\eta$.
\end{itemize}
Note that (a) yields (i). Also note that since the kernel of the adjoint map is contained in the center of $B$ which is $\text{Ad}|_{l}$-diagonalizable and lies inside a split torus of the split Kac-Moody group $\mathcal{H}(l)$ (see \cite[Proposition 3.6]{zbMATH05541531}), if the intersection of the image of $\phi$ with the kernel is infinite then by the normal subgroup theorem of $S$-arithmetic subgroups (see \cite[Theorem (IX)6.14]{zbMATH00049189}) the situation leads to the case (i) of the theorem. Therefore, when either (ii) or (iii) occurs the kernel of the adjoint map contains at most a finite subgroup of the image of $\phi$ (which is central, a priori).\ When the case (b) occurs, one can consider only the restriction of scalars to $K_{\nu}$ (i.e., considering $l=K_{\nu}$) using the functorial property of Chevalley group schemes and Kac-Moody functors over commutative unital rings. Also, since $\textbf{G}$ is absolutely almost simple and $\bar{\eta}$ is an $l$-rational surjection, we conclude $\bar{\eta}$ is actually a central isogeny and hence $K_{\nu}$-root subgroups of $\textbf{G}_{K_{\nu}}$ are mapped to $K_{\nu}$-root subgroups of $\textbf{H}_{K_{\nu}}$. Since $\bar{\eta}|_{K_{\nu}}$ is a virtual extension of $\eta$ by the injectivity of the adjoint map on root subgroups and the congruence subgroup theorem for $S$-arithmetic subgroups (see \cite[Section 6-7.5]{zbMATH01950200}) we conclude that for each $\alpha\in\Phi^{\textbf{G}}$ there exist a finite index subgroup $\mathcal{O}_S^{fin}$ of $\mathcal{O}_S$ (with respect to the additive structure, see e.g., \cite[Chapter 18]{zbMATH03305793}) and roots $\gamma\in\Phi^{\textbf{H}},~\delta\in\Phi^{B}$ such that $$\phi_{\alpha}:U_{\alpha}(\mathcal{O}_S^{fin})\xrightarrow{\eta}U_{\gamma}(K_{\nu})\xrightarrow{\text{Ad}^{-1}|_{U_{\delta}}}U_{\delta}(K_{\nu}).$$ 
Since $S$ contains at least one non-archimedean place, the ring $\mathcal{O}_S$, and hence $\mathcal{O}_S^{fin}$, is dense in $K_{\nu}$. Therefore, for each $\alpha\in\Phi^{\textbf{G}}$ there exists a uniquely determined continuous map $$\tilde{\phi}_{\alpha}:U_{\alpha}(K_{\nu})\xrightarrow{}U_{\delta}(K_{\nu}),$$
virtually extending $\phi_{\alpha}$. Note that $\tilde{\phi}_{\alpha}$ is defined without ambiguity since $\phi_\alpha$ is defined via the composition of $\eta$, which is continuous by super-rigidity, and  the map $\text{Ad}^{-1}|_{U_{\delta}}$ which is continuous since $\text{Ad}|_{U_{\delta}}$ is a continuous group isomorphism of $K_{\nu},$ hence open (see \cite[Corollary 7.16 (iii)]{zbMATH06308047} and Lemma~\ref{aplemtiso}).\ Now since $\textbf{G}$ is simply connected and the kernel of $\text{Ad}|_{B}$ is central we conclude (ii). Moreover, since the lifting maps $\tilde{\phi}_{\alpha}$ are polynomials by the proof of Lemma~\ref{aplemtiso}, the virtual extension is continuous with respect to the (weak) Zariski topologies.

In the case of (c) when $l=\mathbb{R},\mathbb{C}$ by looking at the component of $G_{\infty}$ corresponding to $l$ and co-restricting $\bar{\eta}$ to this component, the same argument as above produces (iii).
\end{proof}
\begin{remark}\label{remboundsemi}\leavevmode
\begin{itemize}
  
    \item[\textbf{1.}] Assume that case (i) of the theorem does not occur. Since $\textbf{G}$ is semisimple, $\textbf{G}_{K_{\nu}}$ is perfect, and so is the image of $\tilde{\phi}$.\ Therefore, without loss of generality, one can assume that $B$ lies inside the derived subgroup of a spherical Levi subgroup by \cite[Lemma 3.3 and Proposition 3.6]{zbMATH05072607} and preservation of the Levi decomposition of bounded subgroups in the Kac-Moody context and linear algebraic groups theory via the adjoint map, see \cite[\S 10.3]{zbMATH01762600}.
    \item[\textbf{2.}] {{Assume that case (ii) of the theorem occurs. If the image of $\phi$ happens to lie inside $\mathcal{H}(\mathcal{O}_{S'})$ for some set of $S'$-integers of $K$, then by Theorem~\ref{thmlsrbc} there exists a non-archimedean valuation $\nu\in S\cap S'$. In this setting, by the construction of the extension on the root subgroups, the image of the $K$-points lies inside $\mathcal{H}(K)$ by the extension $\tilde{\phi}$ (see Lemma~\ref{aplemtiso} and its proof).} In particular, in this setting if $B_{K}$ denotes the $K$-points of $B$ obtained via the natural embedding $\mathcal{H}(K)\xhookrightarrow{}\mathcal{H}(l)$, then $$\tilde{\phi}(\textbf{G}(K))\subset B_{K}.$$ Furthermore, by abuse of notation let $B_{K}:=\tilde{\phi}(\textbf{G}(K))$, since $\textbf{G}(K)$ is a Chevalley group there exist a $g\in\mathcal{H}(K)$ and a spherical sub-diagram $\Pi_{\text{sph}}\subset\Pi$ such that $B_{K}^{g}$ is a central quotient of $$\mathcal{H}_{\Pi_{\text{sph}}}(K):=\langle\mathcal{H}_{\alpha}(K)~|~\alpha\in\Pi_{\text{sph}}\rangle.$$} 
\end{itemize}
\end{remark}
\begin{Definition}
Let $\mathcal{G}$ be a simply connected irreducible $2$-spherical split Kac-Moody functor of rank $\geq 2$.\ Let $\mathcal{H}$ be a centred $2$-spherical split Kac-Moody functor. Let $\Phi^{\mathcal{G}_{re}}$ be a set of real roots of $\mathcal{G}$ and $\Pi$ be a system of its simple roots. Define the  \textbf{$S$-arithmetic Kac-Moody group}
\begin{equation}\label{eqGammaOS}
    \Gamma(\mathcal{O}_{S}):=\langle U_{\alpha}(\mathcal{O}_{S})~|~\alpha\in \Phi^{\mathcal{G}_{re}}\rangle.
\end{equation}
Let $\mathcal{G}_{\alpha,\beta}$ denote the universal Chevalley group functor obtained from $\mathcal{G}$ with respect to a pair of non-orthogonal simple roots $\alpha,\beta\in\Pi$. For an ideal $0\not= q\trianglelefteq\mathcal{O}_{S}$ let $\Gamma_{\alpha,\beta}(q)$ denote the kernel of the natural group homomorphism $$\mathcal{G}_{\alpha,\beta}(\mathcal{O}_{S})\to\mathcal{G}_{\alpha,\beta}(\mathcal{O}_{S}/q).$$ Note that $q$ always has finite index in $\mathcal{O}_{S}$ and hence $\Gamma_{\alpha,\beta}(q)$ has a finite index in $\mathcal{G}_{\alpha,\beta}(\mathcal{O}_{S})$ and hence forms an $S$-arithmetic subgroup. Now define 
\begin{equation}\label{eqgammapiq}
    \Gamma_{\Pi}(q):=\langle \Gamma_{\alpha,\beta}(q)~|~\alpha,\beta\in\Pi~\text{and}~\text{non-orthogonal}~\rangle.
\end{equation}
Let $$\phi: \Gamma(\mathcal{O}_{S})\to\mathcal{H}(l)$$ be a group homomorphism where $l$ is a local field of characteristic  0. 
We call a pair of non-orthogonal simple roots $\alpha,\beta\in\Pi$ \textbf{non-admissible} with respect to $\phi$ if the restriction of $\phi$ to 
\begin{equation}
    \mathcal{G}_{\alpha,\beta}(\mathcal{O}_{s}):=\langle U_{\lambda}(\mathcal{O}_{S})~|~\lambda\in(\mathbb{Z}\alpha+\mathbb{Z}\beta)\cap\Phi^{\mathcal{G}_{re}}\rangle,
\end{equation}
 which we denote by $\phi^{\alpha,\beta}$ is of type Theorem~\ref{thmlsrbc}(i). The pair is called \textbf{admissible} otherwise.
\end{Definition}

\begin{corollary}\label{corstrring}
Let $\textbf{G}$ be an irreducible universal Chevalley group scheme of rank $\geq 2$. Let $S$ and $S'$ be two finite sets of places of $K$ containing all archimedean places and let $S$ contain at least one non-archimedean place. Let $\mathcal{H}$ be a centred $2$-spherical split Kac-Moody functor. Assume that $$\phi:\textbf{G}(\mathcal{O}_{S})\to\mathcal{H}(\mathcal{O}_{S'}),$$ is an abstract injective group homomorphism. Then any system of simple roots $\Pi^{\textbf{G}}$ of the set of roots $\Phi^{\textbf{G}}$ of $\textbf{G}$ does not admit any non-admissible pairs of simple roots with respect to $\phi$ for any embedding $\mathcal{H}(\mathcal{O}_{S'})\xhookrightarrow{}\mathcal{H}(K_{\nu})$, ($\nu'\in S'$). Moreover, $S'\subseteq S.$ 
\end{corollary}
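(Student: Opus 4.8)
The plan is to argue by contradiction in two parts, handling the non-admissibility claim first and then deducing the inclusion $S' \subseteq S$. Suppose some system of simple roots $\Pi^{\textbf{G}}$ admits a non-admissible pair $\alpha, \beta$ with respect to $\phi$, composed with some embedding $\mathcal{H}(\mathcal{O}_{S'}) \hookrightarrow \mathcal{H}(K_{\nu'})$ for $\nu' \in S'$. By definition this means the restriction $\phi^{\alpha,\beta}$ of $\phi$ to the rank-$2$ subgroup $\mathcal{G}_{\alpha,\beta}(\mathcal{O}_S) = \textbf{G}_{\alpha,\beta}(\mathcal{O}_S)$ is of type Theorem~\ref{thmlsrbc}(i), i.e., there is a bounded subgroup $B \leq \mathcal{H}(K_{\nu'})$ such that $\text{Ad}|_B \circ \phi^{\alpha,\beta}$ has relatively compact image. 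Here $\textbf{G}_{\alpha,\beta}$ is an irreducible universal Chevalley group scheme of rank $2$, so Theorem~\ref{thmlsrbc} applies to it with $l = K_{\nu'}$.

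Next I would exploit that the ambient target is $\mathcal{H}(\mathcal{O}_{S'})$, not an arbitrary local field: the image of $\phi^{\alpha,\beta}$ consists of $S'$-integral points, and composing with the adjoint map lands in an arithmetic (more precisely $S'$-arithmetic) subgroup of a linear algebraic group over $K$. The key point is that relative compactness of $\text{Ad}|_B \circ \phi^{\alpha,\beta}(\textbf{G}_{\alpha,\beta}(\mathcal{O}_S))$ in $\textbf{H}_{K_{\nu'}}$ forces this image to be finite, because a discrete relatively compact subset is finite and $S'$-integral points form a discrete subgroup of the product over the places in $S'$; restricting to one factor $K_{\nu'}$ and combining with relative compactness kills the infinite part. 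But $\textbf{G}_{\alpha,\beta}$ is semisimple of rank $\geq 2$, so $\textbf{G}_{\alpha,\beta}(\mathcal{O}_S)$ is perfect and has no finite quotients other than via its finite center (by the normal subgroup theorem, \cite[Theorem (IX)6.14]{zbMATH00049189}, the only normal subgroups are central or of finite index); hence $\phi^{\alpha,\beta}$ would have central, in particular finite, image modulo the kernel of $\text{Ad}$. Since $\phi$ is injective and $\textbf{G}_{\alpha,\beta}(\mathcal{O}_S)$ is infinite, this is a contradiction once we note that the kernel of $\text{Ad}|_B$ is central and $\text{Ad}|_l$-diagonalizable inside a split torus (as in the proof of Theorem~\ref{thmlsrbc}), so its intersection with the image cannot absorb an infinite $S$-arithmetic group. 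Thus no non-admissible pair exists.

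For the inclusion $S' \subseteq S$: with no non-admissible pairs, every fundamental rank-$2$ subgroup falls into case (ii) or (iii) of Theorem~\ref{thmlsrbc}, and in case (ii) the local field $l = K_{\nu'}$ must be a Galois extension of $K_\nu$ for some non-archimedean $\nu \in S$. I would argue that if some $\nu' \in S'$ were not in $S$, then the composition of $\phi$ with the embedding $\mathcal{H}(\mathcal{O}_{S'}) \hookrightarrow \mathcal{H}(K_{\nu'})$ would have to be relatively compact after applying the adjoint map: indeed the image of $\textbf{G}(\mathcal{O}_S)$ in $\mathcal{H}(\mathcal{O}_{S'})$ projects into $\textbf{G}(\mathcal{O}_{S'})$-type arithmetic points, and since $\nu' \notin S$ the $\nu'$-adic absolute values of matrix entries of $U_\alpha(\mathcal{O}_S)$ are bounded (the entries are $S$-integers, hence $\nu'$-integral), so the image lies in a compact subgroup of $\mathcal{H}(K_{\nu'})$ via $\text{Ad}$. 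That places every rank-$2$ restriction in case (i) with respect to the $\nu'$-adic embedding, reinstating non-admissible pairs, contradicting the first part. Hence $\nu' \in S$ for every $\nu' \in S'$, i.e., $S' \subseteq S$.

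The main obstacle I anticipate is the bookkeeping around the kernel of the adjoint map and the passage between ``relatively compact image of $\text{Ad} \circ \phi$'' and ``finite image of $\phi$'' — one must carefully use that the center of the bounded subgroup $B$ lies in a split torus and invoke the normal subgroup theorem exactly as in the proof of Theorem~\ref{thmlsrbc} to rule out the image meeting the kernel in an infinite subgroup; this is where injectivity of $\phi$ (and not merely of $\phi^{\alpha,\beta}$) is essential. A secondary subtlety is making the $\nu'$-integrality/boundedness argument precise enough to conclude relative compactness in the Kac-Moody group $\mathcal{H}(K_{\nu'})$ rather than just in a linear algebraic quotient, which again relies on \cite[\S 2.1.4]{zbMATH05541531} and the structure of bounded subgroups.
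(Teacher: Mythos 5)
Your overall strategy (restrict to the fundamental rank-$2$ subgroups, invoke Theorem~\ref{thmlsrbc}, rule out case (i), and then read off $S'\subseteq S$) follows the paper's outline, but the two arguments you supply for the key steps both break down. For the first step, the implication ``$\mathrm{Ad}|_B\circ\phi^{\alpha,\beta}$ has relatively compact image in $\textbf{H}_{K_{\nu'}}$, and the image consists of $S'$-integral points, hence the image is finite'' is not valid: $\mathcal{H}(\mathcal{O}_{S'})$ is discrete only in the product $\prod_{w\in S'}\mathcal{H}(K_w)$, not in the single factor $\mathcal{H}(K_{\nu'})$, so relative compactness at one place cannot be ``combined with discreteness'' to kill an infinite image. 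Concretely, $\mathrm{SL}_n(\mathbb{Z})\leq \mathrm{SL}_n(\mathbb{Z}[1/q])$ is infinite yet contained in the compact group $\mathrm{SL}_n(\mathbb{Z}_q)$. The paper's own proof is admittedly terse here -- it deduces case (ii) or (iii) from the fact that injectivity forces the image of each rank-$2$ subgroup to be infinite non-abelian, citing Margulis -- but the specific finiteness-from-compactness mechanism you propose is not the missing justification and does not work at a single place.

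The second step is a more clear-cut error. To show $S'\subseteq S$ you argue that if $\nu'\notin S$ then the image is bounded at $\nu'$ because the matrix entries of elements of $U_\alpha(\mathcal{O}_S)$ are $S$-integers and hence $\nu'$-integral. But those are elements of the \emph{source}; $\phi$ is an abstract group homomorphism, so the $\nu'$-integrality of $g\in\textbf{G}(\mathcal{O}_S)$ tells you nothing about the entries of $\phi(g)\in\mathcal{H}(\mathcal{O}_{S'})$, which are $S'$-integers and in general are \emph{not} $\nu'$-integral for $\nu'\in S'$. The paper obtains $S'\subseteq S$ in the opposite direction: once the composite $\textbf{G}(\mathcal{O}_S)\to\mathcal{H}(K_{\nu'})$ is in case (ii) or (iii) of Theorem~\ref{thmlsrbc}, the superrigidity conclusion itself forces $K_{\nu'}$ to be (a Galois extension of, and after restriction of scalars equal to) $K_\nu$ for some $\nu\in S$, whence $\nu'\in S$. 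You should derive the place comparison from the output of local superrigidity rather than from integrality of entries, which $\phi$ does not transport.
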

\begin{proof}
Since the image of $\phi$ and also the image of its restriction to rank 2 subgroups associated to pairs of non-orthogonal roots (see \cite[Theorem VIII(A)]{zbMATH00049189}) are always infinite non-abelian, then applying Theorem~\ref{thmlsrbc} to the composite
$$\phi_{\nu}:\textbf{G}(\mathcal{O}_{S})\xrightarrow{\phi}\mathcal{H}(\mathcal{O}_{S'})\xhookrightarrow{}\mathcal{H}(K_{\nu'}),$$ for any $\nu'\in S'$ leads to case (ii) or case (iii) of Theorem~\ref{thmlsrbc} hence the corollary. Moreover, Theorem~\ref{thmlsrbc}((ii) or (iii)), in turn, implies that there exists a place $\nu\in S$ such that $K_{\nu}=K_{\nu'}$. Therefore, $S'\subseteq S.$ 
\end{proof}
\begin{theorem}[Super-rigidity]\label{thmsrbc}
Let $\mathcal{G}$ be a simply connected irreducible $2$-spherical split Kac-Moody functor of rank $\geq 2$. Let $\mathcal{H}$ be a centred $2$-spherical split Kac-Moody functor. Let $\Phi^{\mathcal{G}_{re}}$ be the set of real roots of $\mathcal{G}$. Let $$\phi:\Gamma(\mathcal{O}_{S})\to\mathcal{H}(l)$$ be a group homomorphism where $l$ is a local field of characteristic  0. Then either
\begin{itemize}
    \item [(a)] there exists an ideal $0\not= q\trianglelefteq\mathcal{O}_{S}$ such that $ \Gamma_{\Pi}(q)$ is contained in the kernel of $\phi$ for a system of simple roots $\Pi,$ or,
    
    \item[(b)] there exists a system of simple roots $\Pi$ such that either
\begin{itemize}
    \item[(i)]for some $\nu\in S\backslash\infty$, $l$ is a Galois extension of $K_{\nu}$ and there exists a continuous (with respect to both the Kac-Peterson and the weak Zariski topologies) group homomorphism $$\phi_{K_{\nu}}:\mathcal{G}(K_{\nu})\to\mathcal{H}(K_{\nu}),$$ which is {uniquely determined} by the property that there exists an ideal $0\not= q\trianglelefteq\mathcal{O}_{S}$ such that
    \begin{equation}
  \tag{VEP}
    \phi_{l}|_{\Gamma_{\Pi}(q)}=\phi|_{\Gamma_{\Pi}(q)};\label{virextprop}
    \end{equation}
    \item[(ii)] or, $l=\mathbb{R}, \mathbb{C}$ and there exists a continuous (with respect to both the Kac-Peterson and the weak Zariski topologies) group homomorphism $$\phi_{l}:\mathcal{G}(l)\to\mathcal{H}(l),$$ which is {uniquely determined} by the property that there exists an ideal $0\not= q\trianglelefteq\mathcal{O}_{S}$ such that
    \begin{equation}
  \tag{VEP}
    \phi_{l}|_{\Gamma_{\Pi}(q)}=\phi|_{\Gamma_{\Pi}(q)}.
    \end{equation}
    \end{itemize}
    \end{itemize}
\end{theorem}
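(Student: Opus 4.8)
The plan is a local-to-global argument: I restrict $\phi$ to the fundamental rank $2$ subgroups of $\mathcal{G}$, apply Theorem~\ref{thmlsrbc} to each restriction, check that the resulting alternative is uniform over the whole diagram, and -- in the virtual-extension branch -- glue the local extensions together by a topological Curtis--Tits argument. First I fix a system of simple roots $\Pi\subset\Phi^{\mathcal{G}_{re}}$. For every non-orthogonal pair $\alpha,\beta\in\Pi$ the hypotheses ``simply connected'' and ``$2$-spherical'' make $\mathcal{G}_{\alpha,\beta}$ an irreducible universal Chevalley functor of rank $2$ whose group of $\mathcal{O}_S$-points is generated by the root subgroups $U_\lambda(\mathcal{O}_S)$, $\lambda\in(\mathbb{Z}\alpha+\mathbb{Z}\beta)\cap\Phi^{\mathcal{G}_{re}}$; I then apply Theorem~\ref{thmlsrbc} to $\phi^{\alpha,\beta}:=\phi|_{\mathcal{G}_{\alpha,\beta}(\mathcal{O}_S)}$. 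This places each pair in case~(i) (\emph{non-admissible}) or in cases~(ii)/(iii) (\emph{admissible}); in the admissible case it yields a homomorphism $\tilde\phi_{\alpha,\beta}$ on $\mathcal{G}_{\alpha,\beta}(K_\nu)$ (when $l$ is non-archimedean) or on $\mathcal{G}_{\alpha,\beta}(l)$ (when $l=\mathbb{R},\mathbb{C}$), continuous for both the Kac--Peterson and the weak Zariski topologies, together with a nonzero ideal $q_{\alpha,\beta}\trianglelefteq\mathcal{O}_S$ with $\tilde\phi_{\alpha,\beta}|_{\Gamma_{\alpha,\beta}(q_{\alpha,\beta})}=\phi|_{\Gamma_{\alpha,\beta}(q_{\alpha,\beta})}$.

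The hard part will be to show this alternative is the same for all pairs and that the place $\nu$ is canonical. Suppose $\{\alpha,\beta\}$ is admissible. By the construction of $\tilde\phi_{\alpha,\beta}$ on root subgroups in the proof of Theorem~\ref{thmlsrbc}, and since an additive polynomial map in characteristic $0$ is linear, $\tilde\phi_{\alpha,\beta}$ maps $U_\beta$ isomorphically onto a full real root subgroup $U_\delta$ of $\mathcal{H}$, so $\phi(U_\beta(q_{\alpha,\beta}))$ is dense in the closed non-compact subgroup $U_\delta(K_\nu)$. If a pair $\{\beta,\gamma\}$ sharing the vertex $\beta$ were non-admissible, then $\mathrm{Ad}|_B\circ\phi^{\beta,\gamma}$ would have relatively compact image; but $\ker(\mathrm{Ad}|_B)$ lies in a split torus (\cite[Proposition 3.6]{zbMATH05541531}) and hence meets $U_\delta$ trivially, so $\mathrm{Ad}|_B$ is injective on $\phi(U_\beta(q_{\alpha,\beta}))$ and sends it, again polynomially and injectively, onto a dense subset of a full root subgroup of $\mathcal{H}$, which is not relatively compact -- a contradiction. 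Since the Coxeter diagram of $\mathcal{G}$ is connected, admissibility propagates along its edges, and therefore either every non-orthogonal pair in $\Pi$ is admissible or every such pair is non-admissible. Comparing, for two pairs sharing a vertex $\beta$, the closures in $\mathcal{H}(l)$ of the corresponding groups $\phi(U_\beta(q))$ then forces the place $\nu\in S\backslash\infty$ of Theorem~\ref{thmlsrbc}(ii), as well as the choice of (ii) versus (iii), to be the same for all pairs.

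If every pair is non-admissible I get alternative~(a). For each pair $\mathrm{Ad}|_B\circ\phi^{\alpha,\beta}$ has relatively compact image, and combining the normal subgroup theorem for the higher-rank $S$-arithmetic group $\mathcal{G}_{\alpha,\beta}(\mathcal{O}_S)$ (\cite[Theorem (IX)6.14]{zbMATH00049189}) with the congruence subgroup property -- and using that $\ker(\mathrm{Ad}|_B)$ is central and torus-valued, hence killed on a sufficiently deep congruence subgroup via the Chevalley commutator formulas -- gives $\Gamma_{\alpha,\beta}(q_{\alpha,\beta})\subseteq\ker\phi$ for some nonzero ideal $q_{\alpha,\beta}$. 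Setting $q:=\bigcap_{\alpha,\beta}q_{\alpha,\beta}$ and using that congruence subgroups shrink as the ideal shrinks, $\Gamma_{\alpha,\beta}(q)\subseteq\ker\phi$ for all non-orthogonal pairs, hence $\Gamma_\Pi(q)\subseteq\ker\phi$.

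If every pair is admissible I get alternative~(b). I put $q:=\bigcap_{\alpha,\beta}q_{\alpha,\beta}$, so that $\tilde\phi_{\alpha,\beta}|_{\Gamma_{\alpha,\beta}(q)}=\phi|_{\Gamma_{\alpha,\beta}(q)}$ for every pair. For a simple root $\beta$ the subgroup $\mathcal{G}_\beta(q)=\mathcal{G}_\beta(\mathcal{O}_S)\cap\Gamma_{\alpha,\beta}(q)$ contains $\langle U_{\pm\beta}(q)\rangle$, which is dense in $\mathcal{G}_\beta(K_\nu)$ because $q$ is dense in $K_\nu$; since $\tilde\phi_{\alpha,\beta}$ and $\tilde\phi_{\beta,\gamma}$ both restrict to $\phi$ there and are continuous, they agree on $\mathcal{G}_\beta(K_\nu)$, and for orthogonal $\alpha,\beta$ the images $\tilde\phi_\alpha(\mathcal{G}_\alpha(K_\nu))$ and $\tilde\phi_\beta(\mathcal{G}_\beta(K_\nu))$ commute. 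Thus the $\tilde\phi_{\alpha,\beta}$ form a compatible family of continuous homomorphisms on the fundamental rank $\leq 2$ subgroups of $\mathcal{G}(K_\nu)$, and the topological Curtis--Tits theorem \cite[Theorem 2.20]{zbMATH06667742} (where $2$-sphericity enters once more) glues them into a homomorphism $\phi_{K_\nu}:\mathcal{G}(K_\nu)\to\mathcal{H}(K_\nu)$ -- or $\phi_l:\mathcal{G}(l)\to\mathcal{H}(l)$ when $l=\mathbb{R},\mathbb{C}$ -- continuous for both topologies and restricting to every $\tilde\phi_{\alpha,\beta}$, so in particular $\phi_{K_\nu}|_{\Gamma_\Pi(q)}=\phi|_{\Gamma_\Pi(q)}$, which is (VEP). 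Finally, for uniqueness: any two continuous homomorphisms $\mathcal{G}(K_\nu)\to\mathcal{H}(K_\nu)$ satisfying (VEP) for ideals $q_1,q_2$ coincide on $\Gamma_\Pi(q_1\cap q_2)\supseteq\langle U_{\pm\alpha}(q_1\cap q_2)\mid\alpha\in\Pi\rangle$, which is dense in $\mathcal{G}(K_\nu)$, hence they coincide everywhere.
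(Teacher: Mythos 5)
Your proposal follows essentially the same route as the paper: restrict $\phi$ to the fundamental rank-$2$ subgroups, apply Theorem~\ref{thmlsrbc} to each non-orthogonal pair, propagate admissibility (or non-admissibility) along the connected diagram using irreducibility, and in the admissible case glue the local extensions via the topological Curtis--Tits theorem \cite[Theorem 2.20]{zbMATH06667742} after checking that adjacent extensions agree on the common $\mathcal{G}_{\alpha_2}$ by density of a congruence subgroup and continuity, with a single ideal (your intersection versus the paper's product, which is immaterial) giving (\ref{virextprop}); the non-admissible case is handled, as in the paper, by the congruence subgroup and normal subgroup theorems. Your justification of the propagation step (non-relative-compactness of $\mathrm{Ad}\circ\phi$ on a root subgroup shared by two adjacent pairs) and of uniqueness (agreement on the dense subgroup $\langle U_{\pm\alpha}(q)\rangle$) is in fact spelled out more explicitly than in the paper, but the strategy is identical.
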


\noindent Notation: We call the property (\ref{virextprop}) from the preceding theorem the \textbf{Virtual Extension Property}. Note however that principal congruence subgroups in arithmetic non-spherical Kac-Moody groups are not of finite index.

\begin{proof}
First assume that there exists a system of simple roots $\Pi$  such that it does not contain any non-admissible pairs with respect to $\phi$. Let $\alpha,\beta\in\Pi$ be a pair of non-orthogonal simple roots. For the restriction map $\phi^{\alpha,\beta}$ we are in the situation of either (ii) or (iii) of Theorem~\ref{thmlsrbc}. When case (ii) occurs by restriction of scalars as in the proof of Theorem~\ref{thmlsrbc}, without loss of generality we assume $l=K_{\nu}$.\ Now by \cite[Theorem 2.20]{zbMATH06667742}, it suffices to check that the local extensions coincide on the overlaps and the obtained global extension satisfies (\ref{virextprop}). To this end, let $\alpha_1,\alpha_2,\alpha_3$ be three simple roots in $\Pi$ such that $\alpha_1$ and $\alpha_3$ are non-orthogonal to $\alpha_2$. For such a triplet we have $$\mathcal{G}_{\alpha_1,\alpha_2}(l)\cap\mathcal{G}_{\alpha_2,\alpha_3}(l)=\mathcal{G}_{\alpha_2}(l)\cong\text{SL}_{2}(l),$$
since the Kac-Moody group functor $\mathcal{G}$ is simply connected and $2$-spherical. For $i=1,2$ let $\phi^{\alpha_i,\alpha_{i+1}}_{l}$ denote the extension of $\phi^{\alpha_i,\alpha_{i+1}}$ from Theorem~\ref{thmlsrbc}. First note that the lifting argument in Theorem~\ref{thmlsrbc} implies that if we denote by $B^{\alpha_i,\alpha_{i+1}}$ the bounded subgroup containing the image of $\phi^{\alpha_i,\alpha_{i+1}}$ then $$\phi^{\alpha_1,\alpha_{2}}_{l}(\mathcal{G}_{\alpha_2}(l))\subset B^{\alpha_2,\alpha_3}.$$ Moreover, by the virtual extension property and the congruence subgroup theorem for $S$-arithmetic subgroups (see \cite[Section 6-7.5]{zbMATH01950200}) there exists an ideal $0\not=q_{\alpha_i,\alpha_{i+1}}\trianglelefteq\mathcal{O}_{S}$ such that
\begin{equation*}\label{eqthmsrgoverlap}
  \phi^{\alpha_i,\alpha_{i+1}}_{l}|_{\Gamma_{\alpha_i,\alpha_{i+1}(q_{\alpha_i,\alpha_{i+1}})}}=\phi^{\alpha_i,\alpha_{i+1}}|_{\Gamma_{\alpha_i,\alpha_{i+1}(q_{\alpha_i,\alpha_{i+1}})}}
\end{equation*}
Moreover, the extensions are uniquely determined by this property. By the lifting argument the uniqueness still holds if we replace the ideal $q_{\alpha_i,\alpha_{i+1}}$ by another ideal that is contained in it. Let $m:=q_{\alpha_1,\alpha_{2}}\cdot q_{\alpha_2,\alpha_{3}}$. Both of the extensions coincide on $\Gamma_{\alpha_2}(m)$ so do on $\mathcal{G}_{\alpha_2}(l)$. This means we have a global extension $\phi_{l}:\mathcal{G}(l)\to\mathcal{H}(l)$. By the above argument for every non-orthogonal pair $\alpha,\beta$ of simple roots in $\Pi$ there exists a local extension $\phi^{\alpha,\beta}_{l}$ which is uniquely determined by the {virtual extension property}, namely (\ref{virextprop}) with respect to an ideal $q_{\alpha,\beta}\trianglelefteq\mathcal{O}_{S}$. Let $q$ be the product of all of such ideals. The above argument on the overlaps of local extensions implies that such local extensions are not only compatible on the overlaps but also they agree on $\Gamma_{\Pi}(q).$ Then by the construction of the global extension, (\ref{virextprop}) holds for the ideal $q$ and the global extension is uniquely determined by this property. Moreover, since in the non-archimedean case the local extensions on fundamental irreducible rank 2 subgroups are continuous with respect to the weak Zariski topology by Theorem~\ref{thmlsrbc}, and  the weak Zariski topology is defined by roots subgroups which are conjugates of fundamental irreducible rank 2 subgroups, the global extension is continuous also with respect to the weak Zariski topology.

To finish the proof, one first needs to show that if there exists a system of simple roots containing an admissible pair with respect to $\phi$, then \textbf{every} non-orthogonal pair of the system of simple roots is admissible.\ In particular, if there exists a non-admissible pair then all pairs are non-admissible. But if there exists an admissible pair of simple roots, namely $\{\alpha,\beta\}$, then applying the local super-rigidity, Theorem~\ref{thmlsrbc} to $\mathcal{G}_{\alpha,\beta}$, (\ref{virextprop}) implies that any non-orthogonal pair of simple roots containing either $\alpha$ or $\beta$ has infinite image and hence is admissible.\ Now the claim follows from the irreducibly of the underlying Kac-Moody group functor. Therefore, if there exists a non-admissible pair then all pairs are non-admissible and consequently the kernel of the restriction $\phi$ to any non-orthogonal pair of simple roots $\alpha,\beta$ contains $\mathcal{G}_{\alpha,\alpha}(q_{\alpha,\beta})$ by the congruence subgroup property for $S$-arithmetic Chevalley groups for some $0\not=q_{\alpha,\beta}\trianglelefteq\mathcal{O}_{S}$. Hence in this situation $\Gamma_{\Pi}(q)$ is contained in the kernel of $\phi$ for $q:=\prod q_{\alpha,\beta}$ where $\alpha,\beta$ run through all non-orthogonal pairs of simple roots in $\Pi.$ 
\end{proof}

\begin{remark}\label{remrapits}\leavevmode

\begin{itemize}
    \item[\textbf{1.}] Note that in the above theorem if there exists a system of simple roots containing at least one admissible pair with respect to $\phi$, then the extension exists and its existence is independent of the choice of a system of simple roots. But to show this, one first needs to establish a commensurability theorem (see Theorem~\ref{thmcombc} below). Therefore, this claim is proved later as Corollary~\ref{corsupringindsys}.
    
    \item[\textbf{2.}] Let $l$ be non-Archimedean. By the construction of local extensions in Theorem~\ref{thmlsrbc}, if the image of $\phi$ is contained in $\mathcal{H}(\mathcal{O}_{S'})$ for some set of $S'$-integers then the $K$-points of the local extensions (see Remark~\ref{remboundsemi}) and hence the global extension are mapped to $K$-points. In other words, $$\phi_{l}(\mathcal{G}(K))\subset\mathcal{H}(K).$$
\end{itemize}
\end{remark}

\section{Strong rigidity}\label{secstrig}
In this section $\mathcal{G}$ denotes an irreducible simply connected $2$-spherical split Kac-Moody group functor of rank $\geq 2$. In this setting $\Phi$ denotes the set of real roots associated to $\mathcal{G}$ and $\Pi$ is a set of simple roots in $\Phi$. Define$$\Gamma(q):=\langle U_{\alpha}(q)~|~\alpha\in\Phi\rangle,$$ and $$\Lambda_{\Pi}(q):=\langle U_{\alpha}(q)~|~\alpha\in\pm\Pi\rangle,$$
where $0\not= q\trianglelefteq\mathcal{O}_{S}$. Note that by definition $\Lambda_{\Pi}(q)$ is readily a subgroup of $\Gamma(q)$.
\begin{lemma}\label{lemspharith}
Let $\Pi$ be spherical of rank $\geq 2$. Then for any ideal $0\not= q\trianglelefteq\mathcal{O}_{S}$ the group $\Lambda_{\Pi}(q)$ has finite index in $\mathcal{G}(\mathcal{O}_{S})$ where $\mathcal{G}$ is simply connected.
\end{lemma}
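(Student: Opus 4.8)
The plan is to reduce the lemma to the congruence subgroup property. Since ``$\Pi$ spherical of rank $\geq 2$'' forces the relevant Chevalley group scheme $\mathbf{G}:=\mathcal{G}_{\Pi}$ (which is $\mathcal{G}$ itself in the finite-type case) to be simply connected, split, and of rank $\geq 2$, with a finite reduced root system $\Phi=\Phi_{\Pi}$ and simple system $\Pi$, and since $\Lambda_{\Pi}(q)\leq\mathbf{G}(\mathcal{O}_{S})$, it suffices to show $[\mathbf{G}(\mathcal{O}_{S}):\Lambda_{\Pi}(q)]<\infty$; I take $\mathbf{G}$ to be absolutely almost simple (i.e.\ $\Pi$ connected), the general case following by working on each almost-simple factor. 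The two steps are: (1) show that $\Lambda_{\Pi}(q)$ contains the elementary congruence subgroup $E(\mathbf{G},q'):=\langle U_{\gamma}(q')\mid\gamma\in\Phi\rangle$ of some deeper level $0\neq q'\trianglelefteq\mathcal{O}_{S}$; (2) quote that $E(\mathbf{G},q')$ already has finite index in $\mathbf{G}(\mathcal{O}_{S})$.

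For step (1) I would argue by induction on the height $|\mathrm{ht}(\gamma)|$ that for each $\gamma\in\Phi$ there is a nonzero ideal $q_{\gamma}\trianglelefteq\mathcal{O}_{S}$ (one may take $q_{\gamma}=q^{N}$ with $N$ depending only on $\Phi$) with $U_{\gamma}(q_{\gamma})\subseteq\Lambda_{\Pi}(q)$. For roots of height $1$, i.e.\ $\gamma\in\pm\Pi$, this holds with $q_{\gamma}=q$ by the definition of $\Lambda_{\Pi}(q)$. If $\mathrm{ht}(\gamma)=h\geq 2$, choose $\alpha_{i}\in\Pi$ with $\delta:=\gamma-\alpha_{i}\in\Phi$ of height $h-1$ (possible since $\Pi$ is connected); by induction $U_{\delta}(q_{\delta})\subseteq\Lambda_{\Pi}(q)$, so for $s\in q_{\delta}$ and $t\in q$ the Steinberg commutator relation
\[
[x_{\delta}(s),x_{\alpha_{i}}(t)]=\prod_{\substack{j,k\geq 1\\ j\delta+k\alpha_{i}\in\Phi}}x_{j\delta+k\alpha_{i}}\bigl(C^{\delta,\alpha_{i}}_{jk}\,s^{j}t^{k}\bigr)
\]
takes place inside $\Lambda_{\Pi}(q)$; the unique factor on a root of height $h$ is $x_{\gamma}(C^{\delta,\alpha_{i}}_{11}st)$, all other factors lie on roots of strictly larger height, and because $\Phi$ is finite and reduced those correction terms are themselves reachable within $\Lambda_{\Pi}(q)$ (there are none in the simply-laced case, where the relation is just $[x_{\delta}(s),x_{\alpha_{i}}(t)]=x_{\gamma}(\pm st)$, and the doubly-laced and $G_{2}$ cases are dealt with by the standard commutator reorderings, producing the highest-height terms first, where no corrections occur). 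Cancelling these and using that the achievable parameter set is an additive subgroup, hence a nonzero ideal of $\mathcal{O}_{S}$, yields $U_{\gamma}(q_{\delta}q)\subseteq\Lambda_{\Pi}(q)$; negative roots are symmetric. Taking $q'$ to be the product of the finitely many ideals $q_{\gamma}$ gives $E(\mathbf{G},q')\subseteq\Lambda_{\Pi}(q)$.

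For step (2): $E(\mathbf{G},q')$ is contained in the principal congruence subgroup $\mathbf{G}(\mathcal{O}_{S},q')$ (the kernel of reduction modulo $q'$), which has finite index in $\mathbf{G}(\mathcal{O}_{S})$ because $\mathcal{O}_{S}/q'$ is finite; and one of the standard forms of the congruence subgroup property for $S$-arithmetic simply connected Chevalley groups of rank $\geq 2$ (Bass--Milnor--Serre; Matsumoto; cf.\ \cite{zbMATH01950200}) says that $\mathbf{G}(\mathcal{O}_{S},q')/E(\mathbf{G},q')$ is finite. Hence $E(\mathbf{G},q')$, and therefore $\Lambda_{\Pi}(q)\supseteq E(\mathbf{G},q')$, has finite index in $\mathbf{G}(\mathcal{O}_{S})$. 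The main obstacle is step (1) --- propagating from the simple root subgroups to all root subgroups while keeping track of the level of the congruence ideal. This is exactly where rank $\geq 2$ and the connectedness of $\Pi$ are essential: in rank $1$ the analogous group $\langle U_{\alpha}(q),U_{-\alpha}(q)\rangle$ is typically a thin subgroup of infinite index in $\mathrm{SL}_{2}(\mathcal{O}_{S})$, so no such statement holds there; and the non-simply-laced root systems require the routine but case-dependent commutator bookkeeping sketched above.
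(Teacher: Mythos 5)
Your proof follows essentially the same route as the paper's: use the Chevalley commutator relations to show that for each root $\gamma\in\Phi$ some nonzero ideal level $q_\gamma$ satisfies $U_\gamma(q_\gamma)\subseteq\Lambda_\Pi(q)$, take the product of these ideals to get $\Gamma(q')=E(\mathbf{G},q')\subseteq\Lambda_\Pi(q)$, and then invoke the congruence subgroup property (the paper cites Raghunathan's Theorem B, you cite Bass--Milnor--Serre/Matsumoto) to conclude that this elementary congruence subgroup already has finite index in $\mathcal{G}(\mathcal{O}_S)$. You merely spell out the height-induction and the non-simply-laced bookkeeping that the paper leaves implicit, so this is correct and not a genuinely different argument.
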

\begin{proof}
First note that any non-trivial ideal has finite index in the additive group of $\mathcal{O}_{S}$ (see \cite[Chapter 18]{zbMATH03305793} or \cite[Chapter III]{zbMATH00756383}). Now, by the commutator relations of simply connected Chevalley groups (see \cite[p. 27]{zbMATH03111938} or \cite[Chapter 6]{zbMATH06680202}) for every root $\alpha\in\Phi$ there exists an ideal $0\not=q^{\alpha}$ contained in $q$ such that $U_{\alpha}(q^{\alpha})$ is contained in $\Lambda_{\Pi}(q)$. Let $q'$ be the multiple of all of such ideals $q^{\alpha}$ for the (finite) set of roots $\Phi$. This implies that $\Gamma(q')$ is contained in $\Lambda_{\Pi}(q)$. But by \cite[Theorem B]{zbMATH03541031}, $\Gamma(q')$ has finite index in $\mathcal{G}(\mathcal{O}_{S})$ and hence $\Lambda_{\Pi}(q)$ has finite index in $\Gamma(q)$. 
\end{proof}
Note here that the last sentence of the above proof shows why the terminology ``Virtual Extension Property'' has been chosen for the property (\ref{virextprop}) as in the spherical cases, the homomorphism $\phi_{l}$ is indeed a virtual extension.
\begin{lemma}\label{lemgamqinv}
For every $g\in\mathcal{G}(K)$ and $0\not= q\trianglelefteq\mathcal{O}_{S}$, there exists an ideal $0\not=\tilde{q}\trianglelefteq\mathcal{O}_{S}$ such that
$$\Lambda_{\Pi}(\tilde{q})\leq\Lambda_{\Pi}^{g}(q).$$
\end{lemma}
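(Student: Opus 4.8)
The plan is to generate $\mathcal{G}(K)$ by the fundamental root one-parameter subgroups, reduce — via a product-closure argument — to the conjugation of a single root group $U_{\beta}(\tilde q)$ by a single $x_{\alpha}(t)$, and then dispatch that case inside a fundamental rank-$2$ Chevalley subgroup using Lemma~\ref{lemspharith} together with the congruence subgroup property. Concretely, let $\mathcal{S}\subseteq\mathcal{G}(K)$ be the set of $g$ such that for every nonzero ideal $q\trianglelefteq\mathcal{O}_{S}$ there is a nonzero ideal $\tilde q\trianglelefteq\mathcal{O}_{S}$ with $\Lambda_{\Pi}(\tilde q)\leq\Lambda_{\Pi}^{g}(q)$; the lemma is the assertion $\mathcal{S}=\mathcal{G}(K)$. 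The set $\mathcal{S}$ is closed under multiplication: given $g,g'\in\mathcal{S}$ and $0\neq q$, pick $0\neq q_{1}$ with $\Lambda_{\Pi}(q_{1})\leq\Lambda_{\Pi}(q)^{g}$ and then $0\neq q_{2}$ with $\Lambda_{\Pi}(q_{2})\leq\Lambda_{\Pi}(q_{1})^{g'}\leq(\Lambda_{\Pi}(q)^{g})^{g'}=\Lambda_{\Pi}(q)^{gg'}$, so $gg'\in\mathcal{S}$. Since $\mathcal{G}$ is simply connected it is centred, so $\mathcal{G}(K)$ is generated by the subgroups $\phi_{i}(\mathrm{SL}_{2}(K))$, $i\in I$; as $\mathrm{SL}_{2}(K)$ is generated by its upper and lower triangular unipotent subgroups, $\mathcal{G}(K)$ is generated by the symmetric set $\Sigma:=\{x_{\alpha}(t)\mid\alpha\in\pm\Pi,\ t\in K\}$. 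It therefore suffices to show $\Sigma\subseteq\mathcal{S}$.

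So fix $\alpha=\varepsilon\alpha_{i}\in\pm\Pi$ and $t\in K$, put $h:=x_{\alpha}(t)$, and let $0\neq q\trianglelefteq\mathcal{O}_{S}$ be given. Since $\Lambda_{\Pi}(\tilde q)=\langle U_{\beta}(\tilde q)\mid\beta\in\pm\Pi\rangle$ and conjugation is an automorphism, it is enough to produce a single $0\neq\tilde q$ with $h\,U_{\beta}(\tilde q)\,h^{-1}\subseteq\Lambda_{\Pi}(q)$ for every $\beta=\delta\alpha_{j}\in\pm\Pi$ (then $h\Lambda_{\Pi}(\tilde q)h^{-1}\subseteq\Lambda_{\Pi}(q)$, i.e.\ $\Lambda_{\Pi}(\tilde q)\leq\Lambda_{\Pi}^{h}(q)$); we treat the finitely many $\beta$ separately and intersect the resulting ideals, a finite intersection of nonzero ideals in the Dedekind domain $\mathcal{O}_{S}$ being nonzero. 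If $j=i$ and $\delta=\varepsilon$, then $h$ and $U_{\beta}(\tilde q)$ lie in the same root group; if $j\neq i$ and $a_{ij}=0$, they commute; in either case $h\,U_{\beta}(\tilde q)\,h^{-1}=U_{\beta}(\tilde q)\subseteq\Lambda_{\Pi}(q)$ as soon as $\tilde q\subseteq q$.

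In the remaining case ($j\neq i$ with $a_{ij}\neq0$, or $j=i$ with $\delta=-\varepsilon$) choose an index $j'$ with $a_{ij'}\neq0$ — take $j'=j$ in the first subcase, and, using that $\mathcal{G}$ is irreducible of rank $\geq 2$, any $j'$ adjacent to $i$ in the second. Let $\mathbf{G}':=\mathcal{G}_{\alpha_{i},\alpha_{j'}}$; by $2$-sphericity this is a finite-dimensional split simply connected semisimple $K$-group of rank $2$ (of type $A_{2}$, $B_{2}/C_{2}$ or $G_{2}$), and its root subgroups $U_{\pm\alpha_{i}},U_{\pm\alpha_{j'}}$ are those of $\mathcal{G}$. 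By Lemma~\ref{lemspharith} applied to $\mathbf{G}'$ with simple system $\{\alpha_{i},\alpha_{j'}\}$, the subgroup $\langle U_{\pm\alpha_{i}}(q),U_{\pm\alpha_{j'}}(q)\rangle$ has finite index in $\mathbf{G}'(\mathcal{O}_{S})$; since $\mathbf{G}'$ is simply connected of $K$-rank $2$, the congruence subgroup property (see \cite{zbMATH01950200}) yields a nonzero ideal $\mathfrak{c}\trianglelefteq\mathcal{O}_{S}$ with
\[\Gamma_{\alpha_{i},\alpha_{j'}}(\mathfrak{c})\ \leq\ \langle U_{\pm\alpha_{i}}(q),U_{\pm\alpha_{j'}}(q)\rangle\ \leq\ \Lambda_{\Pi}(q).\]
Now $h=x_{\varepsilon\alpha_{i}}(t)\in\mathbf{G}'(K)$ and $U_{\beta}(\tilde q)\subseteq\mathbf{G}'(\mathcal{O}_{S})$ (as $\alpha_{j}\in\{\alpha_{i},\alpha_{j'}\}$). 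In a faithful matrix realization of the finite-dimensional group $\mathbf{G}'$ the map $a\mapsto h\,x_{\beta}(a)\,h^{-1}$ is polynomial over $K$ and sends $0$ to $1$, so there is a nonzero $d\in\mathcal{O}_{S}$ (a common denominator of its coefficients, depending only on $t$ and $\mathbf{G}'$) such that for every $a\in d\mathfrak{c}$ the element $h\,x_{\beta}(a)\,h^{-1}$ lies in $\mathbf{G}'(\mathcal{O}_{S})$ and reduces to $1$ modulo $\mathfrak{c}$, i.e.\ $h\,x_{\beta}(a)\,h^{-1}\in\Gamma_{\alpha_{i},\alpha_{j'}}(\mathfrak{c})$. (When $j\neq i$ this can also be seen directly from the Chevalley commutator formula in $\mathbf{G}'$, whose terms are integer polynomials divisible by $a$; when $j=i$, $\delta=-\varepsilon$, it is the $2\times2$ computation $x_{\alpha_i}(t)x_{-\alpha_i}(a)x_{\alpha_i}(-t)=\big(\begin{smallmatrix}1+ta&-t^{2}a\\ a&1-ta\end{smallmatrix}\big)$ in the fundamental $\mathrm{SL}_{2}\leq\mathbf{G}'$, together with functoriality of $\phi_{i}$.) Hence, taking $\tilde q$ to be any nonzero ideal contained in $d\mathfrak{c}\cap q$, we get $h\,U_{\beta}(\tilde q)\,h^{-1}\subseteq\Gamma_{\alpha_{i},\alpha_{j'}}(\mathfrak{c})\subseteq\Lambda_{\Pi}(q)$. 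Intersecting the finitely many ideals produced for the various $\beta$ shows $h\in\mathcal{S}$; as $h$ was an arbitrary element of $\Sigma$, the first paragraph completes the proof.

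The delicate point is the subcase $j=i$, $\delta=-\varepsilon$, i.e.\ the conjugation $x_{\alpha_{i}}(t)\,U_{-\alpha_{i}}(\tilde q)\,x_{\alpha_{i}}(-t)$ inside the fundamental $\mathrm{SL}_{2}$: the conjugate visibly lands in a principal congruence subgroup of $\mathbf{G}'$ but not obviously in $\langle U_{\pm\alpha_{i}}(q),U_{\pm\alpha_{j'}}(q)\rangle$, and it is precisely Lemma~\ref{lemspharith} (finite index) combined with the congruence subgroup property that closes this gap; the ``same-sign'' subcases, by contrast, reduce at once to the Chevalley commutator formula in $\mathbf{G}'$.
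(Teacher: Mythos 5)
Your proof is correct and follows essentially the same route as the paper's: reduce to generators $x_{\alpha}(t)$ with $\alpha\in\pm\Pi$, localize the nontrivial conjugations to a fundamental rank-$2$ Chevalley subgroup, and combine Lemma~\ref{lemspharith} with the congruence subgroup property to land back inside $\Lambda_{\Pi}(q)$. The only real difference is that where the paper invokes $U_{\alpha}(K)\subseteq\text{Comm}_{\mathcal{G}_{\alpha,\beta}(K)}(\mathcal{G}_{\alpha,\beta}(\mathcal{O}_{S}))$ abstractly and then applies the congruence subgroup property to the conjugated arithmetic group, you unpack that commensuration via the explicit denominator-clearing polynomial computation --- which is just the standard proof of that inclusion, so the two arguments coincide in substance.
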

\begin{proof}
Since $\mathcal{G}$ is simply connected it suffices to show the lemma for an element $g\in U_{\alpha}(K)$ where $\alpha\in\pm\Pi$. Moreover, since $g$ acts trivially on any root subgroup $U_{\beta}(K)$ with $\beta$ orthogonal to $\pm\alpha$, it also suffices to investigate the action of $g$ on those root subgroups whose roots are in $\pm\Pi$ and non-orthogonal to $\alpha$. Let $\beta$ be non-orthogonal to $\alpha$. Note that $$g\in U_{\alpha}(K)\subset\text{Comm}_{\mathcal{G}_{\alpha,\beta}(K)}(\mathcal{G}_{\alpha,\beta}(\mathcal{O}_{S}))=\text{Comm}_{\mathcal{G}_{\alpha,\beta}(K)}(\Lambda_{\alpha,\beta}(q)),$$
where the last equality follows from Lemma~\ref{lemspharith}. Therefore, $\Lambda_{\alpha,\beta}^{g}(q)$ is an $S$-arithmetic subgroup. Hence by the congruence subgroup property (see e.g., \cite{zbMATH04196274}) there exists an ideal $0\not=q_{\alpha,\beta}\trianglelefteq\mathcal{O}_{S}$ such that $$\Lambda_{\alpha,\beta}(q_{\alpha,\beta})\leq\Gamma_{\alpha,\beta}(q_{\alpha,\beta})\leq\Lambda^{g}_{\alpha,\beta}(q)\leq\Lambda^{g}_{\Pi}(q).$$
Let $\tilde{q}:=\prod q_{\alpha,\beta}$ where $\beta$ runs through non-orthogonal roots to $\alpha$ in $\Pi$. It is clear that $$\Lambda_{\Pi}(\tilde{q})\leq\Lambda^{g}_{\Pi}(q).$$
\end{proof}
\begin{proposition}\label{propqinva}
For every $g\in\mathcal{G}(K)$ and $0\not= q\trianglelefteq\mathcal{O}_{S}$, there exist not-trivial ideals $q_1,q_2\trianglelefteq\mathcal{O}_{S}$ such that
$$\Gamma_{\Pi}(q_2)\leq\Lambda_{\Pi}(q_1)\leq\Lambda_{\Pi}^{g}(q)\leq\Gamma_{\Pi}^{g}(q).$$
\end{proposition}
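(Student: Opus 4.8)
The plan is to read the displayed chain as three independent inclusions and to settle each one with a result already at hand, being careful that the ideals are produced in the right order: $q_1$ depends on $g$ and $q$, and $q_2$ is then built from $q_1$.

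First I would dispose of the rightmost inclusion $\Lambda_{\Pi}^{g}(q)\leq\Gamma_{\Pi}^{g}(q)$ by conjugating the (unconditional) inclusion $\Lambda_{\Pi}(q)\leq\Gamma_{\Pi}(q)$. The latter is immediate from the definitions: since $\mathcal{G}$ is irreducible of rank $\geq 2$, every simple root $\alpha\in\Pi$ is non-orthogonal to some $\beta\in\Pi$, and both $U_{\alpha}(q)$ and $U_{-\alpha}(q)$ die modulo $q$ inside $\mathcal{G}_{\alpha,\beta}(\mathcal{O}_{S})$, hence lie in $\Gamma_{\alpha,\beta}(q)\subseteq\Gamma_{\Pi}(q)$; passing to the generated subgroup gives $\Lambda_{\Pi}(q)\leq\Gamma_{\Pi}(q)$. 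Next, for the middle inclusion I would simply invoke Lemma~\ref{lemgamqinv} with the given $g$ and $q$: it produces a non-zero ideal $\tilde q\trianglelefteq\mathcal{O}_{S}$ with $\Lambda_{\Pi}(\tilde q)\leq\Lambda_{\Pi}^{g}(q)$, and I set $q_{1}:=\tilde q$.

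It remains to produce $q_{2}$ with $\Gamma_{\Pi}(q_{2})\leq\Lambda_{\Pi}(q_{1})$, which I would do one non-orthogonal pair at a time. Fix non-orthogonal $\alpha,\beta\in\Pi$. Applying Lemma~\ref{lemspharith} to the spherical rank-$2$ diagram $\{\alpha,\beta\}$ (so to the universal, hence simply connected, rank-$2$ Chevalley group $\mathcal{G}_{\alpha,\beta}$), the group $\Lambda_{\alpha,\beta}(q_{1})=\langle U_{\pm\alpha}(q_{1}),U_{\pm\beta}(q_{1})\rangle$ has finite index in $\mathcal{G}_{\alpha,\beta}(\mathcal{O}_{S})$, so it is $S$-arithmetic. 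As $\mathcal{G}_{\alpha,\beta}$ is irreducible universal of rank $2$, the congruence subgroup property (see \cite[Section 6-7.5]{zbMATH01950200}, \cite{zbMATH04196274}) yields a non-zero ideal $q_{\alpha,\beta}\trianglelefteq\mathcal{O}_{S}$ with $\Gamma_{\alpha,\beta}(q_{\alpha,\beta})\leq\Lambda_{\alpha,\beta}(q_{1})$. Set $q_{2}:=\prod q_{\alpha,\beta}$, the product over all non-orthogonal pairs in $\Pi$, which is non-zero since $\mathcal{O}_{S}$ is a domain; then $q_{2}\subseteq q_{\alpha,\beta}$ for each such pair, whence $\Gamma_{\alpha,\beta}(q_{2})\leq\Gamma_{\alpha,\beta}(q_{\alpha,\beta})\leq\Lambda_{\alpha,\beta}(q_{1})\leq\Lambda_{\Pi}(q_{1})$, and passing to the group generated by the $\Gamma_{\alpha,\beta}(q_{2})$ gives $\Gamma_{\Pi}(q_{2})\leq\Lambda_{\Pi}(q_{1})$. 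Concatenating the three inclusions completes the argument.

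As for the main obstacle: there is no new idea here, the proposition being essentially a bookkeeping consequence of Lemmas~\ref{lemspharith} and \ref{lemgamqinv} together with the congruence subgroup property in rank $2$. The two points requiring care are the order of quantification just emphasized, and the restriction to \emph{non-orthogonal} pairs of simple roots, since it is precisely for those that the relevant rank-$2$ Chevalley group is irreducible of rank $2$ and hence enjoys the congruence subgroup property (for an orthogonal pair it would not).
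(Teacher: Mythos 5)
Your proposal is correct and follows essentially the same route as the paper: reduce via Lemma~\ref{lemgamqinv} to finding $q_2$ with $\Gamma_{\Pi}(q_2)\leq\Lambda_{\Pi}(q_1)$, then handle each non-orthogonal pair $\alpha,\beta$ using Lemma~\ref{lemspharith} and the congruence subgroup property and take the product of the resulting ideals. The only difference is that you spell out the (immediate) inclusion $\Lambda_{\Pi}(q)\leq\Gamma_{\Pi}(q)$, which the paper leaves implicit.
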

\begin{proof}
By Lemma~\ref{lemgamqinv} it suffices to show for any ideal $0\not=q_1\trianglelefteq\mathcal{O}_{S}$ there exists an ideal $0\not=q_2\trianglelefteq\mathcal{O}_{S}$ such that  $$\Gamma_{\Pi}(q_2)\leq\Lambda_{\Pi}(q_1).$$ But for every pair of non-orthogonal simple roots $\alpha,\beta\in\Pi$ by the congruence subgroup property for $S$-arithmetic subgroups (see \cite[Section 6-7.5]{zbMATH01950200}) and Lemma~\ref{lemspharith} there exists an ideal $0\not=q_{\alpha,\beta}\trianglelefteq\mathcal{O}_{S}$ such that $$\Gamma_{\alpha,\beta}(q_{\alpha,\beta})\leq\Lambda_{\alpha,\beta}(q_1).$$ Hence the ideal $q_2:=\prod q_{\alpha,\beta}$ yields the desired result where $\alpha,\beta$ run through all pairs of non-orthogonal simple roots.
\end{proof}
\begin{theorem}[Local commensurability]\label{thmlcombc}
Let $\textbf{G}$ be an irreducible universal Chevalley group scheme of rank $\geq 2$.\ Assume $S$ contains at least one non-archimedean place. Let $\textbf{G}_{\mathcal{O}_{S}}$ be a subgroups of $\mathcal{G}(\mathcal{O}_{S})$ isomorphic to a central quotient of $\textbf{G}(\mathcal{O}_{S})$.\ Then for any ideal $0\not= q\trianglelefteq\mathcal{O}_{S}$, the group $\textbf{G}_{\mathcal{O}_{S}}\cap\Gamma_{\Pi}(q)$ has finite index in $\textbf{G}_{\mathcal{O}_{S}}$.  
\end{theorem}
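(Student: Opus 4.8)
The plan is to reduce the assertion to the finite-type (spherical) situation, where it is precisely Lemma~\ref{lemspharith}, by first confining $\textbf{G}_{\mathcal{O}_{S}}$ to a bounded subgroup and then re-running the local super-rigidity Theorem~\ref{thmlsrbc}. Throughout I use that $\textbf{G}_{\mathcal{O}_{S}}$, being a central quotient of a universal irreducible Chevalley $S$-arithmetic group of rank $\geq 2$ with $S$ containing a non-archimedean place, is finitely generated, perfect, non-amenable and has the congruence subgroup property; in particular every finite index subgroup of $\textbf{G}_{\mathcal{O}_{S}}$ contains a principal congruence subgroup. I also use that the conclusion of Lemma~\ref{lemfptbs} holds for $\textbf{G}_{\mathcal{O}_{S}}$ itself, since any cellular isometric action of $\textbf{G}_{\mathcal{O}_{S}}$ on a complete $CAT(0)$ polyhedral complex pulls back through the finite central extension to one of $\textbf{G}(\mathcal{O}_{S})$ with the same fixed point set.

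First I would let $\textbf{G}_{\mathcal{O}_{S}}$ act on the product of the Davis realizations of the two halves of the twin building of $\mathcal{G}(K)$, which is a complete $CAT(0)$ polyhedral complex by~\cite{zbMATH05218470}. By the previous paragraph this action has a global fixed point, hence $\textbf{G}_{\mathcal{O}_{S}}$ stabilizes a spherical residue of each sign and lies in a bounded subgroup of $\mathcal{G}(K)$ (as in the opening of the proof of Theorem~\ref{thmlsrbc}, cf.~\cite[\S 2.1.4]{zbMATH05541531}); since $\textbf{G}_{\mathcal{O}_{S}}$ is perfect and non-amenable, the first item of Remark~\ref{remboundsemi} places it inside the derived subgroup $\mathcal{G}_{\Delta}(K)$ of a conjugate of a spherical Levi, say $g^{-1}\textbf{G}_{\mathcal{O}_{S}}\,g\subseteq\mathcal{G}_{\Delta}(K)$ with $\Delta\subseteq\Pi$ spherical; as $\textbf{G}$ has rank $\geq 2$ and $\mathcal{G}$ is simply connected, $\Delta$ has rank $\geq 2$ and $\mathcal{G}_{\Delta}$ is simply connected. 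Conjugating the statement to be proved by $g^{-1}$ and invoking Proposition~\ref{propqinva} applied to $g^{-1}$, there is a non-zero ideal $q_{1}\trianglelefteq\mathcal{O}_{S}$ with $\Lambda_{\Delta}(q_{1}):=\langle U_{\alpha}(q_{1})\mid\alpha\in\pm\Delta\rangle\subseteq\Lambda_{\Pi}(q_{1})\subseteq g^{-1}\Gamma_{\Pi}(q)\,g$, so it suffices to show that $h:=g^{-1}\textbf{G}_{\mathcal{O}_{S}}\,g$, which now lies in the honest Chevalley group $\mathcal{G}_{\Delta}(K)$, meets $\Lambda_{\Delta}(q_{1})$ in a subgroup of finite index.

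Now I would identify $h$ with an arithmetic subgroup: choosing a place $\mu$ (taken non-archimedean whenever possible) at which $h$ is unbounded and applying Theorem~\ref{thmlsrbc} to the composite $\textbf{G}(\mathcal{O}_{S})\twoheadrightarrow h\hookrightarrow\mathcal{G}_{\Delta}(K)\hookrightarrow\mathcal{G}_{\Delta}(K_{\mu})$, the relatively compact case is ruled out (as in the reasoning of Corollary~\ref{corstrring}, using also the normal subgroup theorem), so $h$ admits a virtual extension $\tilde{\phi}$ to a continuous homomorphism which, by the argument in the proof of Theorem~\ref{thmlsrbc}, is a central isogeny onto its image $\mathcal{G}_{\Delta'}\subseteq\mathcal{G}_{\Delta}$ --- with $\Delta'\subseteq\Delta$ again spherical of rank $\geq 2$ and $\mathcal{G}_{\Delta'}$ simply connected --- and, $h$ being finitely generated hence contained in $\mathcal{G}_{\Delta}(\mathcal{O}_{S'})$ for a finite $S'\supseteq S$, sends $K$-points to $K$-points by the second item of Remark~\ref{remboundsemi}. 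The virtual extension property says that $\tilde{\phi}$ agrees with the given map on a finite index subgroup of $\textbf{G}(\mathcal{O}_{S})$, so the $\tilde{\phi}$-image of that subgroup is at once a finite index subgroup of $h$ and an $S$-arithmetic subgroup of $\mathcal{G}_{\Delta'}$; hence $h$ is commensurable with $\mathcal{G}_{\Delta'}(\mathcal{O}_{S})$. By Lemma~\ref{lemspharith}, $\Lambda_{\Delta'}(q_{1})$ has finite index in $\mathcal{G}_{\Delta'}(\mathcal{O}_{S})$, so $h\cap\Lambda_{\Delta'}(q_{1})$ has finite index in $h$; since $\Lambda_{\Delta'}(q_{1})\subseteq\Lambda_{\Delta}(q_{1})$ we get $[h:h\cap\Lambda_{\Delta}(q_{1})]<\infty$, and conjugating back by $g$ shows that $\textbf{G}_{\mathcal{O}_{S}}\cap\Gamma_{\Pi}(q)$ has finite index in $\textbf{G}_{\mathcal{O}_{S}}$.

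The hard part --- and the place where one uses that $\textbf{G}_{\mathcal{O}_{S}}$ genuinely sits inside $\mathcal{G}(\mathcal{O}_{S})$, not merely up to abstract isomorphism --- is the identification of $h$ with a standard arithmetic subgroup of a spherical Levi: this requires local super-rigidity (Theorem~\ref{thmlsrbc}) together with the structure of bounded subgroups (Remark~\ref{remboundsemi}) and the $K$-rationality of the local extensions. The two auxiliary points needing care there are the exclusion of the relatively compact alternative of Theorem~\ref{thmlsrbc} and the tracking of the $S$-integral structure through the conjugation by $g\in\mathcal{G}(K)$, which is exactly what Proposition~\ref{propqinva} (with Lemma~\ref{lemgamqinv} behind it) handles.
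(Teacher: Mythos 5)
Your argument follows essentially the same route as the paper's proof: local super-rigidity (Theorem~\ref{thmlsrbc}) together with Remark~\ref{remboundsemi} to place a $\mathcal{G}(K)$-conjugate of $\textbf{G}_{\mathcal{O}_{S}}$ inside (a central quotient of) a spherical Levi $\mathcal{G}_{\Pi_{\text{sph}}}(K)$, then Proposition~\ref{propqinva} to absorb the conjugation by $g$ into a smaller congruence subgroup, and finally Lemma~\ref{lemspharith} to settle the spherical case. Your additional step identifying $h$ as commensurable with $\mathcal{G}_{\Delta'}(\mathcal{O}_{S})$ via the virtual extension property makes explicit a reduction the paper leaves implicit, but the overall strategy and the key ingredients are identical.
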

\begin{proof} 
For a non-archimedean place $\nu\in S$, the assumptions provide the following maps 
$$\phi:\textbf{G}(\mathcal{O}_{S})\twoheadrightarrow\textbf{G}_{\mathcal{O}_{S}}\xhookrightarrow{}\mathcal{G}(\mathcal{O}_{S})\xhookrightarrow{}\mathcal{G}(K_{\nu}),$$
where $\phi$ is a surjective group homomorphism with central kernel. Then by Theorem~\ref{thmlsrbc} and Remark~\ref{remboundsemi} on obtains
$$\phi:\textbf{G}(\mathcal{O}_{S})\twoheadrightarrow\textbf{G}_{\mathcal{O}_{S}}\xhookrightarrow{}B_{K}\xhookrightarrow{}\mathcal{G}(K),$$ where $B_{K}$ is a bounded subgroup provided by Remark~\ref{remboundsemi}. Therefore, there exist a $g\in\mathcal{G}(K)$ and a spherical sub-diagram $\Pi_{\text{sph}}\subset\Pi$ such that $B_{K}^{g}$ is a central quotient of $$\mathcal{G}_{\Pi_{\text{sph}}}(K):=\langle\mathcal{G}_{\alpha}(K)~|~\alpha\in\Pi_{\text{sph}}\rangle.$$ Hence it suffices to show that $\mathcal{G}^{-g}_{\Pi_{\text{sph}}}(K)\cap\Gamma_{\Pi}(q)$ has finite index in $\mathcal{G}^{-g}_{\Pi_{\text{sph}}}(\mathcal{O}_{S}).$ To this end, first note that by Proposition~\ref{propqinva} there exists an ideal $q'\trianglelefteq\mathcal{O}_{S}$ such that $\Gamma_{\Pi}(q')\leq\Gamma^{g}_{\Pi}(q).$ Also, by Lemma~\ref{lemspharith} the intersection $\Gamma_{\Pi}(q')\cap\mathcal{G}_{\Pi_{\text{sph}}}(\mathcal{O}_{S})$ has finite index in $\mathcal{G}_{\Pi_{\text{sph}}}(\mathcal{O}_{S})$. But $$\Gamma_{\Pi}(q')\cap\mathcal{G}_{\Pi_{\text{sph}}}(\mathcal{O}_{S})\leq\Gamma^{g}_{\Pi}(q)\cap\mathcal{G}_{\Pi_{\text{sph}}}(\mathcal{O}_{S}),$$
which implies that $\Gamma^{g}_{\Pi}(q)\cap\mathcal{G}_{\Pi_{\text{sph}}}(\mathcal{O}_{S})$ has finite index in $\mathcal{G}_{\Pi_{\text{sph}}}(\mathcal{O}_{S})$ and consequently $\Gamma_{\Pi}(q)\cap\mathcal{G}^{-g}_{\Pi_{\text{sph}}}(\mathcal{O}_{S})$ has finite index in $\mathcal{G}^{-g}_{\Pi_{\text{sph}}}(\mathcal{O}_{S})$.
\end{proof}
\begin{theorem}[Strong rigidity]\label{thmstrigbc}
Let $\mathcal{G}$ and $\mathcal{G}'$ be simply connected irreducible $2$-spherical split Kac–Moody functors of rank $\geq 2$. Let $K$ be an algebraic number field and let $S$ and $S'$ be two finite sets of places containing all the archimedean places and at least one non-archimedean for each of them. Let either $\phi:\Gamma(\mathcal{O}_S)\to\Gamma(\mathcal{O}_{S'})$ or $\phi:\mathcal{G}(\mathcal{O}_S)\to\mathcal{G}'(\mathcal{O}_{S'})$ be abstract group isomorphisms. Then $S=S'$ and for each place $\nu\in S$ there exists a continuous (with respect to both the Kac-Peterson and the weak Zariski topologies) group isomorphism $$\tilde{\phi}:\mathcal{G}(K_{\nu})\to\mathcal{G}'(K_{\nu}),$$
such that it satisfies (\ref{virextprop}). Moreover, $K$-points are preserved by this extension namely, $$\tilde{\phi}|_{\mathcal{G}(K)}:\mathcal{G}(K)\to\mathcal{G}'(K),$$
is an isomorphism. Hence the Kac-Moody functors are equal, i.e., $\mathcal{G}=\mathcal{G}'$.
\end{theorem}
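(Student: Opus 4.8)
The plan is to deduce the strong rigidity theorem by applying the super-rigidity theorem (Theorem~\ref{thmsrbc}) twice, to $\phi$ and to $\phi^{-1}$, and then to show the two resulting local extensions are mutually inverse. First I would reduce both cases to a single setting: given $\phi\colon\mathcal{G}(\mathcal{O}_S)\to\mathcal{G}'(\mathcal{O}_{S'})$ (the case $\Gamma(\mathcal{O}_S)\to\Gamma(\mathcal{O}_{S'})$ is treated identically, working with the $S$-arithmetic groups $\Gamma$ in place of $\mathcal{G}(\mathcal{O}_S)$), I regard the image of $\phi$ as landing in $\mathcal{H}(\mathcal{O}_{S'})$ with $\mathcal{H}=\mathcal{G}'$ and compose with an embedding $\mathcal{G}'(\mathcal{O}_{S'})\hookrightarrow\mathcal{G}'(K_{\nu'})$ for each $\nu'\in S'$. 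Since $\phi$ is injective, Corollary~\ref{corstrring} applies and tells us there are no non-admissible pairs of simple roots with respect to $\phi$; moreover it gives $S'\subseteq S$. Running the symmetric argument on $\phi^{-1}$ (which is also injective) gives $S\subseteq S'$, hence $S=S'$, which disposes of the first assertion.

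Next, for each $\nu\in S=S'$ I invoke Theorem~\ref{thmsrbc}(b): since there are no non-admissible pairs, case (a) cannot occur (the congruence subgroups $\Gamma_{\alpha,\beta}(q)$ are never in the kernel of an injective map), so we obtain a continuous group homomorphism $\phi_{K_\nu}\colon\mathcal{G}(K_\nu)\to\mathcal{G}'(K_\nu)$ (in the real/complex case $\phi_l\colon\mathcal{G}(l)\to\mathcal{G}'(l)$) satisfying the virtual extension property (\ref{virextprop}) with respect to some ideal $q$. By Remark~\ref{remrapits}(2), in the non-archimedean case this extension carries $\mathcal{G}(K)$ into $\mathcal{G}'(K)$. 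Applying the same to $\phi^{-1}$ yields continuous homomorphisms $\psi_{K_\nu}\colon\mathcal{G}'(K_\nu)\to\mathcal{G}(K_\nu)$ also satisfying (VEP) with respect to an ideal $q'$. Then $\psi_{K_\nu}\circ\phi_{K_\nu}$ and the identity on $\mathcal{G}(K_\nu)$ are two continuous homomorphisms agreeing on $\Gamma_\Pi(qq')$ — more precisely, after intersecting the relevant ideals and using Proposition~\ref{propqinva}/Theorem~\ref{thmlcombc} to arrange that both $\phi$ and $\phi^{-1}$ restrict compatibly on a common congruence subgroup — and since $\Gamma_\Pi$ of a non-trivial ideal is Zariski-dense (equivalently Kac--Peterson dense) in $\mathcal{G}(K_\nu)$ by density of $\mathcal{O}_S$ in $K_\nu$ together with the commutator relations, the uniqueness clause in Theorem~\ref{thmsrbc} forces $\psi_{K_\nu}\circ\phi_{K_\nu}=\mathrm{id}$ and likewise $\phi_{K_\nu}\circ\psi_{K_\nu}=\mathrm{id}$. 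Thus each $\tilde\phi:=\phi_{K_\nu}$ is a homeomorphic group isomorphism, and it preserves $K$-points by Remark~\ref{remrapits}(2) applied in both directions.

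Finally, to conclude $\mathcal{G}=\mathcal{G}'$ as functors, I would use that the isomorphism $\tilde\phi|_{\mathcal{G}(K)}\colon\mathcal{G}(K)\to\mathcal{G}'(K)$, being an isomorphism of Kac--Moody groups over an infinite field that is continuous for the Kac--Peterson topology, is compatible with the twin BN-pair structure; by the isomorphism theorem for Kac--Moody groups (Caprace--Mühlherr / Caprace's rigidity, already used in \cite{zbMATH06667742}) such an isomorphism is, up to a diagram automorphism and a torus twist, induced by an isomorphism of the underlying Kac--Moody root data, and since both $\mathcal{G}$ and $\mathcal{G}'$ are simply connected of the same rank the root data — hence the functors — coincide. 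The main obstacle I anticipate is the density/uniqueness step: one must check carefully that $\Gamma_\Pi(q)$ sits densely enough in $\mathcal{G}(K_\nu)$ in the weak Zariski topology — not merely that each fundamental rank~$2$ subgroup $\mathcal{G}_{\alpha,\beta}(\mathcal{O}_S)$ is dense in $\mathcal{G}_{\alpha,\beta}(K_\nu)$, which follows from density of $\mathcal{O}_S$ in $K_\nu$, but that the subgroups generated by these over all non-orthogonal pairs $\Pi$ topologically generate the whole group, which is where $2$-sphericity and the topological Curtis--Tits theorem \cite[Theorem 2.20]{zbMATH06667742} re-enter — and that the ideals produced by the two applications of super-rigidity, together with the conjugation-invariance results of Proposition~\ref{propqinva} and Theorem~\ref{thmlcombc}, can be matched on a common congruence subgroup so that the composite genuinely agrees with the identity on a dense set.
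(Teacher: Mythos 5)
Your overall strategy---apply Theorem~\ref{thmsrbc} to $\phi$ and to $\phi^{-1}$, use Corollary~\ref{corstrring} to rule out non-admissible pairs and to get $S=S'$, and then force the two local extensions to be mutually inverse via uniqueness on a dense congruence subgroup---is the same as the paper's, and your identification of the delicate point (matching the ideals so that the composite genuinely agrees with the identity on some $\Gamma_\Pi(q'')$) is exactly where the paper invests its effort. Two divergences are worth noting. First, the paper does not work with the simply connected forms directly: it first passes to the adjoint quotients $\bar{\mathcal{G}},\bar{\mathcal{G}}'$ (using \cite[Theorem A]{zbMATH05541531}), proves the isomorphism there, and only at the end lifts back to the simply connected forms; this forces a separate analysis of the centres, which are products of copies of $K_\nu^\times$ and whose intersection with $\mathcal{G}(\mathcal{O}_S)$ is controlled via Dirichlet's unit theorem when the ideal in (\ref{virextprop}) is all of $\mathcal{O}_S$. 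Your direct route avoids that detour and, since (\ref{virextprop}) only constrains $\tilde\phi$ on $\Gamma_\Pi(q)$ and $\mathcal{G}(K_\nu)$ is generated by its fundamental rank-$2$ subgroups, it can in principle work---but you should say explicitly why the torus/centre causes no trouble, since the paper evidently did not consider this automatic. Second, you obtain bijectivity from the two-sided-inverse identity, whereas the paper proves surjectivity separately (density of $\phi(\bar\Gamma_{\Pi}(q))$ via Theorem~\ref{thmlcombc} plus continuity in the weak Zariski topology) and then proves injectivity on each $\bar{\mathcal{G}}_{\alpha,\beta}(K_\nu)$ by showing the kernel of the restricted extension is central in the ambient Kac--Moody group, hence trivial. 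Your route is cleaner if the composite-identity step goes through, but that step is precisely the one you leave as an ``anticipated obstacle'': to conclude $\psi_{K_\nu}(\phi_{K_\nu}(x))=x$ for $x$ in a congruence subgroup you must produce an ideal $\tilde q$ with $\phi(\Gamma_\Pi(\tilde q))\subseteq\Gamma'_{\Pi'}(q')$, and this is not a formality---it is supplied by combining Theorem~\ref{thmlcombc} (the image of each rank-$2$ arithmetic piece meets $\Gamma'_{\Pi'}(q')$ in finite index) with the congruence subgroup property on the source. So your proposal is correct in outline, but the two places where it is thinnest (the centre and the congruence-subgroup matching) are exactly the two places where the paper's proof does nontrivial work.
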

\begin{proof}
For any group functor $G$, let $\bar{G}(k)$ be the group modulo its centre via $\pi:G(k)\to\bar{G}(k):=G(k)/Z(G(k)).$ By the assumptions there exists either of the following isomorphisms (by abuse of notation): $$\phi:\bar{\Gamma}(\mathcal{O}_S)\to\bar{\Gamma'}(\mathcal{O}_{S'}),$$ or $$\phi:\bar{\mathcal{G}}(\mathcal{O}_S)\to\bar{\mathcal{G}'}(\mathcal{O}_{S'}).$$
By the isomorphism theorem over fields, \cite[Theorem A]{zbMATH05541531}, to prove the isomorphism theorem for $S$-arithmetic subgroups, it suffices to prove the theorem for $\bar{\mathcal{G}}$ and $\bar{\mathcal{G}}'.$ Let $\Pi^{\mathcal{G}'}$ denote a system of simple roots in the set of real roots $\Phi^{\mathcal{G}'}$ of $\mathcal{G}'$. By Corollary~\ref{corstrring} neither $\Pi^{\mathcal{G}}$ nor $\Pi^{\mathcal{G}'}$ contains any non-admissible pairs of simple roots with respect to $\phi$ and $\phi^{-1}$ respectively and $S=S'.$  Moreover, neither $\Pi^{\mathcal{G}}$ nor $\Pi^{\mathcal{G}'}$ contains any non-admissible pairs of simple roots with respect to $\phi\circ\pi$ and $\phi^{-1}\circ\pi$ respectively. This is because otherwise this would mean the image of irreducible rank 2 $S$-arithmetic subgroups are finite under $\phi\circ\pi$ and similarly $\phi^{-1}\circ\pi$ which would imply that the centre of such irreducible rank 2 subgroups are finite index subgroups which is a contradiction to the Normal subgroup theorem for $S$-arithmetic subgroups (see e.g., \cite[Theorem A and B]{zbMATH03541031} or \cite[Theorem (IX)6.14]{zbMATH00049189}). This allows one to apply Theorem~\ref{thmsrbc} to $\phi$ and $\phi^{-1}$ and also to the following maps
 $$\phi\circ\pi:{\Gamma}(\mathcal{O}_S)\to\bar{\Gamma'}(\mathcal{O}_{S'})\xhookrightarrow{}\bar{\mathcal{G}'}(K_{\nu}),$$ and $$\phi^{-1}\circ\pi:{\Gamma'}(\mathcal{O}_S')\to\bar{\Gamma}(\mathcal{O}_{S})\xhookrightarrow{}\bar{\mathcal{G}}(K_{\nu}),$$ for every $\nu\in S$ in order to obtain the following extensions
 $$\phi_{K_{\nu}}:\mathcal{G}(K_{\nu})\to\bar{\mathcal{G}'}(K_{\nu}),$$ and $$\phi^{-1}_{K_{\nu}}:{\mathcal{G}'}(K_{\nu})\to\bar{\mathcal{G}}(K_{\nu}),$$ with  (\ref{virextprop}) for some ideal $q\trianglelefteq\mathcal{O}_{S}=\mathcal{O}_{S'}$. 
 
 If the above extensions are surjective then they factor through the centre of their domains and one obtains 
 $$\bar{\phi}_{K_{\nu}}:\bar{\mathcal{G}}(K_{\nu})\to\bar{\mathcal{G}'}(K_{\nu}),$$ and $$\bar{\phi}^{-1}_{K_{\nu}}:\bar{{\mathcal{G}'}}(K_{\nu})\to\bar{\mathcal{G}}(K_{\nu}),$$ with  (\ref{virextprop}) for some ideal $q\trianglelefteq\mathcal{O}_{S}=\mathcal{O}_{S'}$.
 
 To check surjectivity one notices that the subgroups ${\Gamma}(\mathcal{O}_S)$, $\mathcal{G}(\mathcal{O}_{s})$ and ${\Gamma'}(\mathcal{O}_{S'})$, $\mathcal{G}'(\mathcal{O}_{S'})$ are dense in $\mathcal{G}(K_{\nu})$ and $\mathcal{G}'(K_{\nu})$ respectively. Moreover, since the weak Zariski topology is coarser than the Kac-Peterson topology, ${\Gamma}(\mathcal{O}_S)$, $\mathcal{G}(\mathcal{O}_{s})$ and ${\Gamma'}(\mathcal{O}_{S'})$, $\mathcal{G}'(\mathcal{O}_{S'})$ are dense in $\mathcal{G}(K_{\nu})$ and $\mathcal{G}'(K_{\nu})$ respectively with the weak Zariski topology as well (see \cite[Lemma 7.14.]{zbMATH06308047}). Consequently, $\bar{\Gamma}(\mathcal{O}_S)$, $\bar{\mathcal{G}}(\mathcal{O}_{s})$ and $\bar{\Gamma'}(\mathcal{O}_{S'})$, $\bar{\mathcal{G}'}(\mathcal{O}_{S'})$ are dense in $\bar{\mathcal{G}}(K_{\nu})$ and $\bar{\mathcal{G}'}(K_{\nu})$ respectively with respect to both the Kac-Peterson and the weak Zariski topologies. Now continuity of the extensions together with their local commensurability, Theorem~\ref{thmlcombc} and (\ref{virextprop}) implies surjectivity as follows. By  Theorem~\ref{thmlcombc} and (\ref{virextprop}) one has
 $$[\phi^{-1}(\bar{\mathcal{G}'}_{\alpha,\beta}(\mathcal{O}_{S'})):\phi^{-1}(\bar{\mathcal{G}'}_{\alpha,\beta}(\mathcal{O}_{S'}))\cap\bar{\Gamma}_{\pi^{\mathcal{G}}}(q)]<\infty,$$
 and hence
 $$[\bar{\mathcal{G}'}_{\alpha,\beta}(\mathcal{O}_{S'}):\bar{\mathcal{G}'}_{\alpha,\beta}(\mathcal{O}_{S'})\cap\phi(\bar{\Gamma}_{\pi^{\mathcal{G}}}(q))]<\infty.$$ This implies that the image  $\phi(\bar{\Gamma}_{\Pi^{\mathcal{G}}}(q))$ is dense in $\bar{\mathcal{G}'}(K_{\nu}).$ But by Theorem~\ref{thmsrbc} $\bar{\phi}_{K_{\nu}}$ and $\bar{\phi}^{-1}_{K_{\nu}}$ are continuous with respect to the weak Zariski topology hence surjectivity.
 
 By continuity of the extensions the isomorphism for the adjoint forms is obtained if one only shows that 
 $\bar{\phi}^{-1}_{K_{\nu}}\circ\bar{\phi}_{K_{\nu}}=\text{id}$ and $\bar{\phi}_{K_{\nu}}\circ\bar{\phi}^{-1}_{K_{\nu}}=\text{id}.$
 Since $\mathcal{G}$ and $\mathcal{G}'$ are simply connected, their fundamental rank 2  subgroups generate the whole group for any field. The same holds for $\bar{\mathcal{G}}$ and $\bar{\mathcal{G}}'$. Moreover, both of the Kac-Moody functors are irreducible. Hence, by symmetry of the argument it suffices to show that for a non-orthogonal pair of simple roots $\alpha,\beta\in\Pi^{\mathcal{G}},$ one has
 \begin{equation}\label{thmsteqli}
 (\bar{\phi}^{-1}_{K_{\nu}}\circ\bar{\phi}_{K_{\nu}})|_{\bar{\mathcal{G}}_{\alpha,\beta}(K_{\nu})}=\text{id}|_{\bar{\mathcal{G}}_{\alpha,\beta}(K_{\nu})}.   
 \end{equation}
 Note that if ${\phi}^{\alpha,\beta}_{K_{\nu}}$ and $\bar{\phi}^{\alpha,\beta}_{K_{\nu}}$ denote ${\phi}_{K_{\nu}}|_{\mathcal{G}_{\alpha,\beta}(K_{\nu})}$ and $\bar{\phi}_{K}|_{\mathcal{G}_{\alpha,\beta}(K_{\nu})}$ respectively, then by (\ref{virextprop}) and uniqueness of local extensions the following diagram is commutative:

\begin{equation*}
\begin{tikzpicture}
  \matrix (m) [matrix of math nodes,row sep=3em,column sep=4em,minimum width=2em]
  {
     \mathcal{G}_{\alpha,\beta}(K_{\nu}) & B^{\alpha,\beta}_{K_{\nu}} \\
    \bar{\mathcal{G}}_{\alpha,\beta}(K_{\nu}) &  \\};
  \path[-stealth]
    (m-1-1) edge node [left] {$\pi$} (m-2-1)
            edge node [above] {${\phi}^{\alpha,\beta}_{K_{\nu}}$} (m-1-2)
    
    (m-2-1) edge node [below] {${\bar{\phi}^{\alpha,\beta}_{K_{\nu}}}$} (m-1-2)
           ;
\end{tikzpicture} 
 \end{equation*}
 where $B^{\alpha,\beta}_{K_{\nu}}$ is obtained as in Theorem~\ref{thmlsrbc}. By Remark~\ref{remboundsemi} $B^{\alpha,\beta}_{K_{\nu}}$ can be modified such that the above morphisms are surjective and $B^{\alpha,\beta}_{K_{\nu}}$ lies inside the derived subgroup of a spherical Levi subgroup for any $\nu\in S=S'.$ 
Moreover, in the above diagram $\pi$ is a finite covering. Therefore, $$\text{Ker}(\bar{\phi}^{\alpha,\beta}_{K_{\nu}})\xhookrightarrow{i}\bar{\mathcal{G}}_{\alpha,\beta}(K_{\nu})\xrightarrow{\bar{\phi}^{\alpha,\beta}_{K_{\nu}}}B^{\alpha,\beta}_{K_{\nu}},$$
is a central extension. Recall that one may identify $B^{\alpha,\beta}_{K_{\nu}}$ with the image of $\bar{\phi}^{\alpha,\beta}_{K_{\nu}}$ to ensure surjectivity. {Now, since the elements in the kernel of the local extensions are central in rank 2  subgroups and the extensions coincide on overlaps, an element of the kernel of $\bar{\phi}^{\alpha,\beta}_{K_{\nu}}$ not only commutes with rank $1$ root subgroups of $\bar{\mathcal{G}}_{\alpha,\beta}(K_{\nu})$ but also with all root subgroups whose corresponding roots are non-orthogonal to $\pm\alpha$ and $\pm\beta$ and since the Kac-Moody functor is centred, the element in the kernel has to lie inside the centre of the ambient Kac-Moody group, namely the centre of $\bar{\mathcal{G}}(K_{\nu})$, which is trivial. Hence $\bar{\phi}^{\alpha,\beta}_{K_{\nu}}$ is an isomorphism.} Therefore, we have obtained (\ref{thmsteqli}). {Moreover, this also implies that 
$$\phi^{\alpha,\beta}_{K_{\nu}}:\mathcal{G}_{\alpha,\beta}(K_{\nu})\to B^{\alpha,\beta}_{K_{\nu}},$$
is a central isogeny.} 

To this end, one needs to consider the centres and how the extensions $\phi_{K_{\nu}}$ $\phi^{-1}_{K_{\nu}}$ behave on them. Let $n,m$ be the size and the rank of the generalized Cartan matrix corresponding to $\mathcal{G}$ respectively. Then the centre of $\mathcal{G}(K_{\nu})$ is the direct product of $n-m$ copies of the unit group $K_{\nu}^{\times}$. When the ideal $0\not= q\trianglelefteq\mathcal{O}_{S}$ from  (\ref{virextprop}) is proper then the intersection of  $\mathcal{G}(\mathcal{O}_{S})$ with the centre is finite hence negligible. When $q$ happens to be the whole ring $\mathcal{O}_{S}$ then by the Dirichlet unit theorem the intersection of $\mathcal{G}(\mathcal{O}_{S})$ with the centre is dense in the centre. Now since the extensions map centres to centres, they can be lifted from the adjoint forms to the simply connected forms such that they are compatible with $\phi$.
\end{proof}
The next theorem can be interpreted as a \textbf{commensurability} property in the Kac-Moody context.
\begin{theorem}[Commensurability]\label{thmcombc}
Let $\mathcal{G}$ be an irreducible simply connected $2$-spherical split Kac-Moody group functor of rank $\geq 2$. Let $\Phi$ be the set of real roots associated to $\mathcal{G}$ and let $\Pi$ and $\Pi'$ be two different systems of simple roots in $\Phi$. Then for every ideal $0\not= q\trianglelefteq\mathcal{O}_{S}$ there exists a non-trivial ideal $\bar{q}\trianglelefteq\mathcal{O}_{S}$ such that $$\Gamma_{\Pi'}(\bar{q})\leq\Gamma_{\Pi}(q).$$
\end{theorem}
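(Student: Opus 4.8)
The plan is to reduce the statement to Proposition~\ref{propqinva} by exhibiting the passage from $\Pi$ to $\Pi'$ as conjugation by a single element of $\mathcal{G}(\mathcal{O}_S)$ that is defined over $\mathbb{Z}$. Recall that any two systems of simple roots of $\Phi$ lie in a single $W$-orbit, $W$ acting simply transitively on the chambers of the associated Coxeter complex (see, e.g., \cite{zbMATH05288866}); so fix $w\in W$ with $w(\Pi)=\Pi'$. Writing $w=s_{i_1}\cdots s_{i_k}$ as a product of simple reflections and letting $\tilde s_i\in\mathcal{G}(\mathbb{Z})$ denote the standard representative of $s_i$ (the image under $\phi_i$ of $\bigl(\begin{smallmatrix}0&1\\-1&0\end{smallmatrix}\bigr)\in\mathrm{SL}_2(\mathbb{Z})$), the element $\hat w:=\tilde s_{i_1}\cdots\tilde s_{i_k}$ lies in $\mathcal{G}(\mathbb{Z})\subseteq\mathcal{G}(\mathcal{O}_S)\subseteq\mathcal{G}(K)$, and by the standard action of the extended Weyl group on the real root subgroups, conjugation by $\hat w$ sends $U_\lambda(R)$ onto $U_{w\lambda}(R)$ for every real root $\lambda$ and every ring $R$, naturally in $R$.

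Next I would check that conjugation by $\hat w$ respects the congruence structure used in the definition of $\Gamma_\Pi(\,\cdot\,)$. Since $w$ is linear, preserves $\Phi^{\mathcal{G}_{re}}$, and preserves orthogonality of pairs of real roots, conjugation by $\hat w$ carries $\mathcal{G}_{\alpha,\beta}(\mathcal{O}_S)$ isomorphically onto $\mathcal{G}_{w\alpha,w\beta}(\mathcal{O}_S)$ for every non-orthogonal pair $\alpha,\beta\in\Pi$; and because $\hat w\in\mathcal{G}(\mathcal{O}_S)$, this conjugation commutes with the reduction homomorphisms $\mathcal{G}(\mathcal{O}_S)\to\mathcal{G}(\mathcal{O}_S/q')$, in the sense that it descends to conjugation by the image of $\hat w$. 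Hence it sends $\Gamma_{\alpha,\beta}(q')=\ker\bigl(\mathcal{G}_{\alpha,\beta}(\mathcal{O}_S)\to\mathcal{G}_{\alpha,\beta}(\mathcal{O}_S/q')\bigr)$ onto $\Gamma_{w\alpha,w\beta}(q')$; and since $w$ restricts to a bijection between the non-orthogonal pairs in $\Pi$ and those in $\Pi'=w(\Pi)$, passing to the subgroups generated by these pairs gives
$$\hat w\,\Gamma_{\Pi}(q')\,\hat w^{-1}=\Gamma_{w(\Pi)}(q')=\Gamma_{\Pi'}(q')\qquad\text{for every }0\neq q'\trianglelefteq\mathcal{O}_S .$$

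Finally I would apply Proposition~\ref{propqinva} with $g:=\hat w\in\mathcal{G}(K)$ and the given ideal $q$, obtaining a non-trivial ideal $\bar q\trianglelefteq\mathcal{O}_S$ with $\Gamma_{\Pi}(\bar q)\leq\Gamma_{\Pi}^{\hat w}(q)=\hat w^{-1}\Gamma_{\Pi}(q)\hat w$. Conjugating both sides by $\hat w$ and using the displayed identity with $q'=\bar q$ yields
$$\Gamma_{\Pi'}(\bar q)=\hat w\,\Gamma_{\Pi}(\bar q)\,\hat w^{-1}\leq\hat w\bigl(\hat w^{-1}\Gamma_{\Pi}(q)\hat w\bigr)\hat w^{-1}=\Gamma_{\Pi}(q),$$
which is the assertion (if the ambient convention is $H^g=gHg^{-1}$, apply the proposition to $\hat w^{-1}$ instead; the argument is unchanged). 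I expect the only real content beyond Proposition~\ref{propqinva} to be the bookkeeping of the second paragraph — that the change of simple systems is inner, is realized over $\mathbb{Z}$, and is therefore compatible with reduction modulo $q$ — all of which is standard Kac--Moody input. One can equally well induct on the length of $w$, treating a single simple reflection $s_i$ at a time with $g=\tilde s_i$ in Proposition~\ref{propqinva}, with no change to the argument.
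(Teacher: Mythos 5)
Your reduction is genuinely different from the paper's argument. The paper does not conjugate $\Pi$ into $\Pi'$ at all: it applies the local commensurability statement (Theorem~\ref{thmlcombc}) directly to each rank~$2$ subgroup $\mathcal{G}_{\alpha,\beta}(\mathcal{O}_S)$ attached to a non-orthogonal pair $\alpha,\beta\in\Pi'$, concludes that $\mathcal{G}_{\alpha,\beta}(\mathcal{O}_S)\cap\Gamma_\Pi(q)$ has finite index there, invokes the congruence subgroup property to squeeze a congruence subgroup $\Gamma_{\alpha,\beta}(q_{\alpha,\beta})$ into that intersection, and sets $\bar q:=\prod q_{\alpha,\beta}$. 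The only input about $\Pi'$ is that its non-orthogonal pairs generate central quotients of rank~$2$ Chevalley groups inside $\mathcal{G}(\mathcal{O}_S)$; no relation between $\Pi$ and $\Pi'$ is used. Your version, by contrast, feeds a single integral Weyl-group representative $\hat w$ into Proposition~\ref{propqinva}. Granting your first sentence, the remaining bookkeeping (naturality of $\mathrm{Int}(\hat w)$ on root subgroups over any ring, compatibility with reduction mod $q'$, hence $\hat w\,\Gamma_\Pi(q')\,\hat w^{-1}=\Gamma_{w(\Pi)}(q')$) is correct, and your route is shorter and more elementary: it bypasses Theorem~\ref{thmlcombc}, which itself rests on the local super-rigidity machinery.

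The gap is precisely that first sentence. Simple transitivity of $W$ on the chambers of the Coxeter complex only tells you that the systems of simple roots \emph{arising from chambers} form a single $W$-orbit; it does not tell you that every system of simple roots contained in $\Phi$ arises from a chamber. Already $-\Pi$ is a system of simple roots of $\Phi$ that is not of the form $w(\Pi)$ when $W$ is infinite (there is no longest element); that particular case is harmless here because $\mathcal{G}_{\alpha,\beta}=\mathcal{G}_{-\alpha,-\beta}$ forces $\Gamma_{-\Pi}(q)=\Gamma_\Pi(q)$, but it shows the orbit claim as stated is false, and for general Kac--Moody root systems the classification of root bases of the real root system up to $W\times\{\pm1\}$ is a nontrivial matter that you would at least have to cite or restrict to. So either (a) justify that every system of simple roots in $\Phi$ in the sense intended by the theorem lies in $W(\pm\Pi)$, or (b) adopt the paper's strategy, which needs no such conjugacy. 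As written, your proof establishes the theorem only for $\Pi'\in W(\pm\Pi)$, which is a weaker statement than the one the paper proves and uses (the point of Corollary~\ref{corsupringindsys} is independence of the extension from the choice of an \emph{arbitrary} system of simple roots).
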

\begin{proof}
For every non-orthogonal pair of simple roots $\alpha,\beta$ in $\Pi'$ by the local commensurability (Theorem~\ref{thmlcombc}) $\mathcal{G}_{\alpha,\beta}(\mathcal{O}_{S})\cap\Gamma_{\Pi}(q)$ has finite index in $\mathcal{G}_{\alpha,\beta}(\mathcal{O}_{S})$. Now the classical commensurability for $S$-arithmetic Chevalley groups implies that there exists an ideal $q_{\alpha,\beta}\trianglelefteq\mathcal{O}_{S}$ such that $\Gamma_{\alpha,\beta}(q_{\alpha,\beta})\leq\mathcal{G}_{\alpha,\beta}(\mathcal{O}_{S})\cap\Gamma_{\Pi}(q).$ Therefore, the ideal $\bar{q}:=\prod q_{\alpha,\beta}$ satisfies the desired property where $\alpha,\beta$ run through all pairs of non-orthogonal simple roots in $\Pi'$.
\end{proof}
Next result was announced in Remark~\ref{remrapits}(1). 
\begin{corollary}\label{corsupringindsys}
Let $\mathcal{G}$ be a simply connected irreducible $2$-spherical split Kac-Moody functor of rank $\geq 2$. Let $\mathcal{H}$ be a centred $2$-spherical split Kac-Moody functor. Let $\Phi^{\mathcal{G}_{re}}$ be the set of real roots of $\mathcal{G}$. Let $$\phi:\Gamma(\mathcal{O}_{S})\to\mathcal{H}(l)$$ be a group homomorphism where $l$ is a local field of characteristic  0. If there exists an admissible pair of simple roots for a system of simple roots $\Pi$ of $\Phi$ with respect to $\phi$, then there exists a unique extension of $\phi$ to $\mathcal{G}(l)$ with (\ref{virextprop}) for any system of simple roots in $\Phi$. 
\end{corollary}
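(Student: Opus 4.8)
The plan is to derive the corollary formally from the super-rigidity Theorem~\ref{thmsrbc} together with the commensurability Theorem~\ref{thmcombc}; as anticipated in Remark~\ref{remrapits}(1), it is precisely the latter that removes any dependence on the chosen system of simple roots.

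First I would fix the given system $\Pi$ containing an admissible pair. By the argument already carried out inside the proof of Theorem~\ref{thmsrbc}, the presence of a single admissible pair in $\Pi$ together with the irreducibility of $\mathcal{G}$ forces every non-orthogonal pair of simple roots of $\Pi$ to be admissible, so alternative (a) of Theorem~\ref{thmsrbc} cannot occur. Hence Theorem~\ref{thmsrbc} already provides a continuous (with respect to both the Kac-Peterson and the weak Zariski topologies) group homomorphism $\phi_l\colon\mathcal{G}(l)\to\mathcal{H}(l)$ --- in case (b)(i), after the usual restriction of scalars as in the proof of Theorem~\ref{thmlsrbc}, a homomorphism $\phi_{K_\nu}\colon\mathcal{G}(K_\nu)\to\mathcal{H}(K_\nu)$, which I again denote $\phi_l$ --- together with a non-trivial ideal $q\trianglelefteq\mathcal{O}_S$ satisfying $\phi_l|_{\Gamma_\Pi(q)}=\phi|_{\Gamma_\Pi(q)}$.

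Next I would check that this \emph{same} $\phi_l$ witnesses (\ref{virextprop}) for an arbitrary system $\Pi'$ of simple roots of $\Phi$. The case $\Pi'=\Pi$ is trivial, and for $\Pi'\neq\Pi$ one applies Theorem~\ref{thmcombc} to the pair $\Pi,\Pi'$ and the ideal $q$ to obtain a non-trivial ideal $\bar q\trianglelefteq\mathcal{O}_S$ with $\Gamma_{\Pi'}(\bar q)\le\Gamma_\Pi(q)$; since $\phi_l$ and $\phi$ agree on $\Gamma_\Pi(q)$ they agree on the subgroup $\Gamma_{\Pi'}(\bar q)$, which is (\ref{virextprop}) for $\Pi'$. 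For the uniqueness part I would suppose $\psi\colon\mathcal{G}(l)\to\mathcal{H}(l)$ is continuous and agrees with $\phi$ on $\Gamma_{\Pi'}(q')$ for some system $\Pi'$ and some $0\neq q'\trianglelefteq\mathcal{O}_S$, then use Theorem~\ref{thmcombc} once more to pick a non-trivial ideal $q''\subseteq q'$ with $\Gamma_{\Pi'}(q'')\le\Gamma_\Pi(q)$, so that $\psi$ and $\phi_l$ both coincide with $\phi$, hence with each other, on $\Gamma_{\Pi'}(q'')$. For each non-orthogonal pair $\alpha,\beta\in\Pi'$ the principal congruence subgroup $\Gamma_{\alpha,\beta}(q'')\le\Gamma_{\Pi'}(q'')$ is a finite-index $S$-arithmetic subgroup of the rank-$2$ Chevalley group $\mathcal{G}_{\alpha,\beta}(\mathcal{O}_S)$, and therefore dense in $\mathcal{G}_{\alpha,\beta}(l)$ with respect to both topologies --- exactly the density fact used in the proof of Theorem~\ref{thmstrigbc}. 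Continuity then forces $\psi=\phi_l$ on each $\mathcal{G}_{\alpha,\beta}(l)$, and since $\mathcal{G}$ is simply connected and $2$-spherical its fundamental rank-$2$ subgroups generate $\mathcal{G}(l)$, whence $\psi=\phi_l$.

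I do not expect a real obstacle here: once Theorem~\ref{thmcombc} is granted, the corollary is a formal gluing-and-density argument of the same flavour as the surjectivity and uniqueness steps in the proof of Theorem~\ref{thmstrigbc}. The only point requiring a little care is to make sure the congruence subgroups $\Gamma_{\alpha,\beta}(q'')$ are dense in the local rank-$2$ groups in the relevant topologies, which is the density invoked repeatedly above; and, if one wants the statement literally phrased for all systems at once, to keep careful track of the ideals when intersecting across different systems via Theorem~\ref{thmcombc}.
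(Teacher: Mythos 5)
Your argument is correct and follows essentially the same route as the paper's (very terse) proof: obtain the extension from Theorem~\ref{thmsrbc} for the given system $\Pi$, then use the commensurability Theorem~\ref{thmcombc} to transfer the property (\ref{virextprop}) and its uniqueness to an arbitrary system $\Pi'$. Your elaboration of the uniqueness step via density of congruence subgroups in the local rank-$2$ subgroups is a sound filling-in of what the paper leaves implicit in the phrase ``uniquely determined''.
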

\begin{proof}
Since one can apply the same arguments to $\Gamma_{\Pi}(q)$ for any ideal $0\not= q\trianglelefteq\mathcal{O}_{S}$ to obtain the unique extension as in Theorem~\ref{thmsrbc}, the corollary follows from Theorem~\ref{thmcombc}.
\end{proof}

\appendix 
\section{Continuous morphisms of local fields}\label{ApA}
The following result is well-known. We reproduce its proof for the convenience of the reader. 
\begin{lemma}\label{aplemtiso}
Let $K$ be an algebraic number field and let $\nu$ be a valuation. Let  $\phi:({K}_{\nu}, +)\to({K}_{\nu}, +)$ be a non-trivial continuous group homomorphism with respect to the standard topology on the completion ${K}_{\nu}$. Then $\phi$ is open.
\end{lemma}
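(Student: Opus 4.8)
The plan is to reduce everything to linear algebra over a complete local field. The first step is the functional equation: additivity forces $\phi(nx)=n\phi(x)$ for all $n\in\mathbb{Z}$ and $x\in K_\nu$. In the archimedean case $K_\nu\in\{\mathbb{R},\mathbb{C}\}$, and cancelling denominators gives $\phi(rx)=r\phi(x)$ for $r\in\mathbb{Q}$; since $\phi$ is continuous and $\mathbb{Q}$ is dense in $\mathbb{R}$, this upgrades to $\mathbb{R}$-linearity of $\phi$. In the non-archimedean case $K_\nu$ is a finite extension of $\mathbb{Q}_p$, and since $\mathbb{Z}$ is dense in $\mathbb{Z}_p$ and $\phi$ is continuous, the identity $\phi(ax)=a\phi(x)$ persists for $a\in\mathbb{Z}_p$ and then, dividing by powers of $p$, for $a\in\mathbb{Q}_p$; so $\phi$ is $\mathbb{Q}_p$-linear. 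Writing $k$ for the relevant complete subfield ($\mathbb{R}$ or $\mathbb{Q}_p$) and $d:=\dim_k K_\nu<\infty$, we have reduced to the case of a $k$-linear endomorphism $\phi$ of the $d$-dimensional topological $k$-vector space $K_\nu$.

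The second step records that for such $\phi$ openness is equivalent to surjectivity. All norms on a finite-dimensional vector space over a complete valued field are equivalent, so $K_\nu\cong k^{d}$ as topological $k$-vector spaces and every proper $k$-subspace is closed with empty interior; since $\phi(K_\nu)$ is a $k$-subspace and is the image of the open set $K_\nu$, non-surjectivity of $\phi$ would contradict openness. Conversely a surjective $k$-linear endomorphism of $k^{d}$ is bijective, hence represented by an invertible matrix over $k$ and thus a homeomorphism. So the whole statement reduces to surjectivity of $\phi$ — equivalently, $\phi$ being $k$-linear on a finite-dimensional space, to injectivity of $\phi$.

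The last step is this injectivity/surjectivity. When $d=1$ — in particular for $K_\nu=\mathbb{R}$ or $K_\nu=\mathbb{Q}_p$ — non-triviality already forces $\phi(x)=cx$ with $c\neq 0$, which is bijective. In the generality in which the lemma is actually invoked, $\phi$ is the isomorphism $\mathrm{Ad}|_{U_\delta}$ transported through the identification $U_\delta\cong\mathbb{G}_a$; since $\mathrm{char}\,K_\nu=0$, every isomorphism of the algebraic group $\mathbb{G}_a$ over $K_\nu$ has the form $x\mapsto cx$ with $c\in K_\nu^{\times}$, which yields surjectivity at once and simultaneously displays $\phi$, hence the lifts $\tilde\phi_\alpha$ built from it, as degree-one polynomial maps — precisely the assertion quoted in the proof of Theorem~\ref{thmlsrbc}. (For a bijective $\phi$ one could alternatively quote the open mapping theorem for the locally compact, second countable group $(K_\nu,+)$.) I expect the only genuinely substantive point to be the linearity reduction of the first paragraph; after that the argument is classical, and the main care needed is to keep the passage between ``non-trivial'' and ``bijective'' honest in the situations where the lemma is applied.
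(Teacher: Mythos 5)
Your reduction to linearity is exactly the paper's: use additivity to get $\phi(nx)=n\phi(x)$ for $n\in\mathbb{Z}$, clear denominators, and invoke continuity together with the density of $\mathbb{Z}$ in $\mathbb{Z}_p$ (resp.\ of $\mathbb{Q}$ in $\mathbb{R}$) to conclude that $\phi$ is $k$-linear for $k=\mathbb{Q}_p$ or $\mathbb{R}$, so on that core the two arguments coincide. Where you diverge is in the final step, and your version is the more careful one: the paper simply asserts that openness ``follows from the fact that $K_\nu$ is isomorphic to $n$ direct product copies of $\mathbb{Q}_p$ as topological vector spaces,'' whereas you observe that a non-trivial continuous $k$-linear endomorphism of a space of $k$-dimension $d\geq 2$ need not be open (a projection onto a proper subspace is non-trivial, continuous and additive, but its image has empty interior). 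So the lemma as literally stated is only automatic for $d=1$, i.e.\ for $K_\nu=\mathbb{R}$ or $K_\nu=\mathbb{Q}_p$; in general one needs surjectivity, equivalently injectivity, of the linearized map. Your repair --- noting that in the only place the lemma is invoked, in the proof of Theorem~\ref{thmlsrbc}, the map in question is the group isomorphism $\mathrm{Ad}|_{U_\delta}$, hence bijective, hence open by invertibility of the representing matrix (or by the open mapping theorem for locally compact second countable groups) --- is sound and also justifies the polynomiality claim used there. In short: same route for the substantive part, but you have identified and patched a genuine imprecision in the statement and in the paper's own last line; the honest hypothesis should be ``bijective'' (or one should restrict to $d=1$) rather than ``non-trivial.''
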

\begin{proof} 
\textbf{(non-archimedean case):} Let $\nu$ be a non-archimedean valuation.
Let $\phi(1_{\mathbb{Q}_{p}})=x$ for an embedding of $\mathbb{Q}_{p}$ in $K_{\nu}$ where $p$ in the characteristic of the residue field of $K_{\nu}$ and $x\in K_{\nu}.$ Since $\phi$ is a group homomorphism, for any $n\in\mathbb{Z}\subset\mathbb{Q}_{p},$ one has $\phi(n)=n\cdot x.$ Since $\mathbb{Z}$ is dense in $\mathbb{Z}_{p}\subset\mathbb{Q}_{p},$ and $\phi$ is continuous, the same holds for any $z\in\mathbb{Z}_{p},$ namely $\phi(z)=z\cdot x.$ But for any $q\in\mathbb{Q}_{p},$ there exists an integer $m\in\mathbb{Z}$ such that $m\cdot q\in\mathbb{Z}_p,$ hence $\phi$ is a multiplication restricted to any copies of $\mathbb{Q}_{p}$ in $K_{\nu}$. Now since $K_{\nu}$ is an $n=[K_{\nu}:\mathbb{Q}_{p}]$ ($n<\infty$) dimensional  extension of $\mathbb{Q}_p$, one can readily show that for a $\mathbb{Q}_{p}$-basis $\{e_1,\cdots, e_n\}$ and any  $X=\alpha_{1}e_1+\cdots+\alpha_{n}e_n\in K_{\nu}$ one has 
$$\phi(X)=\alpha_{1}\phi(e_1)+\cdots+\alpha_{n}\phi(e_n).$$ The lemma follows from the fact that $K_{\nu}$ is isomorphic to $n$ direct product copies of $\mathbb{Q}_{p}$ as topological vector spaces.

\textbf{(archimedean case):} Let $\nu$ be an archimedean valuation. It suffices to consider only $K_{\nu}=\mathbb{R}$. When $K_{\nu}=\mathbb{C},$ the same arguments as applied to $\mathbb{Q}_{p}$ and $K_{\nu}$ can be applied to $\mathbb{R}$ and $\mathbb{C}.$ Now let $\phi(1)=x$ for some $x\in\mathbb{R}.$ Then for every $n\in\mathbb{Z},$ $\phi(n)=n\cdot x$ and hence for any rational number $q=n/m\in\mathbb{Q}$ ($n$ and $m$ coprime) one has $m\phi(q)=\phi(m\cdot q)=\phi(n)=n\cdot x$ and hence $\phi(q)=q\cdot x$ for any $q\in\mathbb{Q}.$ Since $\phi$ is continuous and $\mathbb{Q}$ is dense in $\mathbb{R},$ one has $\phi(r)=r\cdot x$ for any $r\in\mathbb{R}.$
\end{proof}

\end{document}